\newtheorem{lemma}{Lemma}
\newtheorem{theorem}{Theorem}
\newtheorem{claim}{Claim}
\newcommand{\dss}{\displaystyle\sum}
\newcommand{\lp}{\left(}
\newcommand{\rp}{\right)}
\newcommand{\cx}{{\bf x}}
\newcommand{\cw}{{\bf w}}
\newcommand{\cz}{{\bf z}}
\newcommand{\vol}{{\rm Vol}}
\DeclarePairedDelimiter{\floor}{\lfloor}{\rfloor}
\DeclarePairedDelimiter{\abs}{\lvert}{\rvert}%
\newcommand{\WL}[1]{\textcolor{magenta}{{}Comment by WL: #1}}
\newcommand{\ZW}[1]{\textcolor{purple}{{}Comment by ZW: #1}}
\newcommand{\bx}{\mathbf{x}}
\newcommand{\one}{\mathbf{1}}
\title{Maximum spread of $K_{2,t}$-minor-free graphs}
\author{
William Linz, \thanks{University of South Carolina, Columbia, SC. ({\tt wlinz@mailbox.sc.edu}). The author is partially supported by NSF DMS 2038080 grant.}
\and Linyuan Lu, \thanks{University of South Carolina, Columbia, SC. ({\tt lu@math.sc.edu}). The author is partially supported by NSF DMS 2038080 grant.}
\and Zhiyu Wang \thanks{Georgia Institute of Technology, Atlanta, GA.
({\tt zwang672@gatech.edu}).}
}
\begin{document}

\maketitle
\abstract{
The spread of a graph $G$ is the difference between the largest and smallest eigenvalues of the adjacency matrix of $G$. In this paper, we consider
the family of graphs which contain no $K_{2,t}$-minor.
We show that for any $t\geq 2$, there is an integer $\xi_t$ such that
the maximum spread of an $n$-vertex $K_{2,t}$-minor-free graph is achieved by the graph obtained by joining a vertex to the disjoint union of $\lfloor \frac{2n+\xi_t}{3t}\rfloor$ copies of $K_t$ and
$n-1 - t\lfloor \frac{2n+\xi_t}{3t}\rfloor$
isolated vertices. The extremal graph is unique, except when $t\equiv 4 \pmod {12}$ and
$\frac{2n+ \xi_t} {3t}$ is an integer, in which case the other extremal graph is the graph obtained by joining a vertex to the disjoint union of $\lfloor \frac{2n+\xi_t}{3t}\rfloor-1$ copies of $K_t$ and $n-1-t(\lfloor \frac{2n+\xi_t}{3t}\rfloor-1)$ isolated vertices.
Furthermore, we give an explicit formula for $\xi_t$.
}

\section{Introduction}
Given a square matrix $M$, the \textit{spread} of $M$, denoted by $S(M)$, is defined as $S(M):= \max_{i,j} |\lambda_i -\lambda_j|$, where the maximum is taken over all pairs of eigenvalues of $M$. In other words, $S(M)$ is the diameter of the spectrum of $M$.
Given a graph $G=(V,E)$ on $n$ vertices, the \textit{spread} of $G$, denoted by $S(G)$, is defined as the spread of the adjacency matrix $A(G)$ of $G$. Let $\lambda_1(G) \geq \cdots \geq \lambda_n(G)$ be the eigenvalues of $A(G)$. Here $\lambda_1$ is called the $\textit{spectral radius}$ of $G$. Since $A(G)$ is a real symmetric matrix, we have that the $\lambda_i$s are all real numbers. Thus $S(G) = \lambda_1 -\lambda_n$.

The systematic study of the spread of graphs was initiated by Gregory, Hershkowitz, and Kirkland \cite{GHK2001}. One of the central focuses of this area is to find the maximum or minimum spread over a fixed family of graphs and characterize the extremal graphs. Problems of such extremal flavor have been investigated for trees~\cite{AP2015}, graphs with few cycles~\cite{FWG2008, PBA2009, Wu-Shu2010}, the family of all $n$-vertex graphs~\cite{Aouchiche2008, BRTU2021+, Riasanovsky2021,Stanic2015, Stevanovic2014, Urschel2021}, the family of bipartite graphs~\cite{BRTU2021+}, graphs with a given matching number~\cite{LZZ2007}, girth~\cite{WZS2013}, or size~\cite{Liu-Liu2009}, and very recently for the families of outerplanar graphs~\cite{GBT2022, LLLW2022} and planar graphs~\cite{LLLW2022}. We note that the spreads of other matrices associated with a graph have also been extensively studied (see e.g. references in \cite{GBT2022, Cao-Vince93, Cvetkovic-Rowlinson90}).

Given two graphs $G$ and $H$, the \textit{join} of $G$ and $H$, denoted by $G\vee H$, is the graph obtained from the disjoint union of $G$ and $H$ by connecting every vertex of $G$ with every vertex of $H$. Let $P_k$ denote the path on $k$ vertices. Given two graphs $G$ and $H$, let $G\cup H$ denote the disjoint union of $G$ and $H$. Given a graph $G$ and a positive integer $k$, we use $kG$ to denote the disjoint union of $k$ copies of $G$. Given $v \subseteq V(G)$, let $N_G(v)$ denote the set of neighbors of $v$ in $G$, and let $d_G(v)$ denote the degree of $v$ in $G$, i.e., $d_G(v) = |N(v)|$. Given $S\subseteq V(G)$, define $N_G(S)$ as $N_G(S) = \cup_{v\in S} (N_G(v)\backslash S)$. Given a graph $G$ and disjoint vertex subsets $S,T\subseteq V(G)$, we use $E_G(S)$ to denote the set of edges in $E(G[S])$, and use $E_G(S,T)$ to denote the set of edges with one endpoint in $S$ and the other endpoint in $T$. For all above definitions, we may omit the subscript $G$ when there is no ambiguity. A graph $H$ is called a \textit{minor} of a graph $G$ if a graph isomorphic to
$H$ can be obtained from a subgraph of G by contracting edges. A graph $G$ is called \textit{$H$-minor-free} if $H$ is not a minor of $G$. 

There has been extensive work on finding the maximum spectral radius of $K_{s, t}$-minor-free graphs. Nikiforov~\cite{Nikiforov2017} showed that every sufficiently large $n$-vertex $K_{2,t}$-minor-free graph $G$ satisfies $\lambda_1(G)\leq (t-1)/2+\sqrt{n+(t^2-2t-3)/4}$, with equality if and only if $n \equiv 1 \pmod t$ and $G$ is $K_1\vee \floor{n/t}K_t$.
Tait~\cite{Tait2019} extended Nikiforov's result to $K_{s,t}$-minor-free graphs by giving an upper bound on the maximum spectral radius of a sufficiently large $n$-vertex $K_{s,t}$-minor-free graph $G$, and showed that the upper bound is tight if and only if $n \equiv s-1 \pmod t$ and $G$ is $K_{s-1}\vee \floor{(n-s+1)/t}K_t$. In the same paper, Tait conjectured that for all $t\geq s\geq 2$, the maximum spectral radius of a sufficiently large $n$-vertex $K_{s,t}$-minor-free graph is attained by $K_{s-1}\vee (pK_t \cup K_q)$, where $p, q$ satisfy that $n-s+1 = pt +q$ and $q\in [t]$.
Very recently, the $K_{s,t}$-minor-free graphs with maximum spectral radius were determined for $t\ge s\ge 2$ by Zhai and Lin~\cite{ZL2022}.

In this paper, we determine the maximum-spread $K_{2, t}$-minor-free graphs on $n$ vertices for sufficiently large $n$ and for all $t\geq 2$.

\begin{theorem}\label{thm:main}
For $t\geq 2$  and $n$ sufficiently large, the graph that maximizes the spread over the family of $K_{2,t}$-minor-free graphs on $n$ vertices is 
$$K_1\vee  \left( \left\lfloor \frac{2n+\xi_t}{3t} \right\rfloor K_t \cup \left(n-1- t\left\lfloor \frac{2n+\xi_t}{3t} \right\rfloor\right)  P_1\right)$$
where
$$\xi_t=\begin{cases}
2\left\lfloor \frac{3t}{4}-1 - \frac{(t-1)^2}{9}\right \rfloor  &  \mbox{ if } t \mbox{ is even},\\
\left\lfloor \frac{3t}{2}-2 - \frac{2(t-1)^2}{9}\right \rfloor & \mbox{ if } t\geq 3, \mbox{ and t is odd.}\\
\end{cases}
$$
The extremal graph is unique unless $t\equiv 4 \pmod {12}$ and
$\frac{2n+ \xi_t} {3t}$
is an integer.
In this special case, the maximum spread is achieved by two extremal graphs $$K_1\vee  \left( \left\lfloor \frac{2n+\xi_t}{3t} \right\rfloor K_t \cup \left(n-1- t\left\lfloor \frac{2n+\xi_t}{3t} \right\rfloor\right)  P_1\right)$$
and 
$$K_1\vee  \left( \left(\left\lfloor \frac{2n+\xi_t}{3t} \right\rfloor-1\right) K_t \cup \left(n-1- t\left(\left\lfloor \frac{2n+\xi_t}{3t} \right\rfloor-1\right)\right)  P_1\right).$$
\end{theorem}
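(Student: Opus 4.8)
The plan is to split the argument into a \emph{structural reduction}, which pins the extremal graph down to the single integer parameter $m$ (the number of $K_t$-blocks), and an \emph{eigenvalue optimization}, which selects the optimal $m$ and detects the tie. For the reduction I would run a local-switching analysis driven by the two relevant eigenvectors. Let $G$ be a spread-maximizer, let $\mathbf{x}>0$ be a unit Perron eigenvector for $\lambda_1$ and let $\mathbf{z}$ be a unit eigenvector for $\lambda_n$, so that
$$S(G)=\lambda_1-\lambda_n=\mathbf{x}^{\top}A(G)\mathbf{x}-\mathbf{z}^{\top}A(G)\mathbf{z}=\sum_{ij\in E(G)}2\left(x_ix_j-z_iz_j\right).$$
Using $\mathbf{x}$ and $\mathbf{z}$ as test vectors for a modified graph, adding a non-edge $ij$ changes the spread by at least $2(x_ix_j-z_iz_j)$, so in an extremal graph every non-edge addable without creating a $K_{2,t}$-minor satisfies $x_ix_j\le z_iz_j$, with analogous two-sided inequalities controlling which edges are present.

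Combining these switching inequalities with Nikiforov's bound $\lambda_1\le (t-1)/2+\sqrt{n+\ldots}$ (so $\lambda_1=\Theta(\sqrt n)$, and symmetrically $\lambda_n=-\Theta(\sqrt n)$), I would show that both eigenvectors concentrate on a single vertex $u$, whose entries $x_u,\,|z_u|$ are of order $1$ while all others are of order $n^{-1/2}$, and that this forces $u$ to dominate $G$. Since $K_2\vee(\text{any graph on at least }t\text{ vertices})$ already contains $K_{2,t}$, there is exactly one dominating vertex; and with $u$ dominating, $K_{2,t}$-minor-freeness of $G$ forces $H:=G-u$ to be $K_{1,t}$-minor-free, since a $K_{1,t}$-minor in $H$ together with $u$ would yield a $K_{2,t}$-minor in $G$. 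A further sequence of switching steps on $H$ then shows that in the extremal configuration every component of $H$ is either a complete graph $K_t$ or a single vertex, giving $H=mK_t\cup sP_1$ with $s=n-1-mt$: completing a block of size at most $t$ to a clique raises $\lambda_1$ without creating a $K_{1,t}$-minor, while a partial clique $K_q$ with $1<q<t$ can always be improved by either merging it into a full clique or dissolving it into pendant vertices on $u$. This last point—that leftover vertices prefer to be pendants rather than to form a smaller clique—is the spread-specific phenomenon that distinguishes the extremal family here from the $\lambda_1$-extremal family $K_1\vee(pK_t\cup K_q)$.

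With the graph reduced to $G_m:=K_1\vee(mK_t\cup sP_1)$, the partition $\{u\}$, the clique vertices, and the pendant vertices is equitable, and a direct computation shows that $S(G_m)=\lambda_1-\lambda_n$ is the gap between the largest and smallest roots of the cubic
$$p_m(\lambda)=\lambda^{3}-(t-1)\lambda^{2}-(n-1)\lambda+(n-1-mt)(t-1),$$
the remaining eigenvalues of $G_m$ being $-1$, $0$, and $t-1$, which lie strictly inside this gap for large $n$. Treating the constant term as a continuous parameter and differentiating the roots implicitly, the stationarity condition $p_m'(\lambda_1)=p_m'(\lambda_n)$ simplifies to $\lambda_1+\lambda_n=\tfrac{2(t-1)}{3}$, equivalently the middle root equals $\tfrac{t-1}{3}$; substituting this root into $p_m$ solves for the optimal real block-count
$$m^{*}=\frac{1}{3t}\left(2n-2-\frac{2(t-1)^{2}}{9}\right).$$
Because $S(G_m)$ is strictly unimodal in $m$ (the objective is concave in the constant term, which is affine in $m$), the optimum is the nearest integer to $m^{*}$. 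Writing nearest-integer as a floor, $\mathrm{round}(m^{*})=\lfloor m^{*}+\tfrac12\rfloor=\lfloor\tfrac{2n+\xi_t}{3t}\rfloor$ forces $\xi_t$ to be an integer approximating $\tfrac{3t}{2}-2-\tfrac{2(t-1)^{2}}{9}$; carrying the exact (unrounded) comparison of the two integers bracketing $m^{*}$ through, rather than its continuous approximation, yields the stated closed form with its parity-dependent floors.

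The main obstacle is the structural reduction, precisely because $\mathbf{x}$ and $\mathbf{z}$ must be controlled simultaneously and pull in opposite directions: cliques inflate $\lambda_1$ while pendant vertices deepen $\lambda_n$, so neither edge-maximization nor $\lambda_1$-maximization alone predicts the answer. Making the eigenvector concentration quantitative enough to force a single dominating vertex, and then proving the ``pendants beat a partial clique'' switching step, are the delicate parts. A secondary, purely arithmetic difficulty is the tie analysis: the equality $S(G_m)=S(G_{m-1})$ occurs exactly when the continuous optimizer sits at a half-integer, and one must verify that, after the exact discrete comparison, this happens precisely when $t\equiv4\pmod{12}$ and $\frac{2n+\xi_t}{3t}\in\mathbb{Z}$, which is exactly where the parity corrections built into $\xi_t$ become essential.
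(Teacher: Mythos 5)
Your overall architecture matches the paper's (dominating vertex via eigenvector concentration, reduce $H=G-u$ to $mK_t\cup sP_1$, then optimize $m$ through the cubic $\lambda^3-(t-1)\lambda^2-(n-1)\lambda+(n-1-mt)(t-1)$, which is exactly the paper's equation for $\lambda_1,\lambda_n$ of $G_m$), and your treatment of the last step is actually a nice alternative: after depressing the cubic, the spread is an even function of the constant term $q$ (since $x\mapsto -x$ negates $q$), your stationarity condition correctly locates $q=0$ at $m^*=\frac{1}{3t}\bigl(2n-2-\frac{2(t-1)^2}{9}\bigr)$, and the evenness in $q$ is precisely the symmetry $\phi_2(x)=-\phi_1(-x)$ that the paper invokes only to settle the tie case. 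If you additionally prove strict monotonicity of the spread in $|q|$ (which follows from $d\lambda/dq=-1/p'(\lambda)$ and the fact that $q>0$ forces $|\lambda_n|>\lambda_1$), you get an exact, non-asymptotic replacement for the paper's expansion of $S(G_{\ell+1})-S(G_\ell)$ to order $n^{-5/2}$. But as written this step has two holes: ``concave in the constant term, hence round to nearest integer'' is asserted, not proved, and the arithmetic of $\xi_t$ is not actually carried out --- in particular you never explain why $\xi_t$ must be taken as $2\lfloor\frac{3t}{4}-1-\frac{(t-1)^2}{9}\rfloor$ (an \emph{even} integer) when $t$ is even, which is needed so that the tie condition ``$\frac{2n+\xi_t}{3t}\in\mathbb{Z}$'' is even attainable, nor why ties occur exactly when $4\mid t$ and $3\mid t-1$, i.e.\ $t\equiv 4\pmod{12}$.

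The more serious gap is the structural reduction of $H$ to $mK_t\cup sP_1$. Your proposed local switches (``complete a partial clique; otherwise merge it or dissolve it into pendants'') do not address a general $K_{1,t}$-minor-free $H$: such an $H$ can contain components that are neither partial cliques nor isolated vertices --- for instance $(t-1)$-regular components that are not complete (long cycles already for $t=3$), trees, or irregular sparse components --- and no finite menu of ``complete/dissolve'' moves is exhibited that handles them. Moreover the monotonicity you need is unproven and genuinely delicate: adding edges raises $\lambda_1$ but typically also raises $\lambda_n$, so by your own inequality the spread changes by at least $2(x_ix_j-z_iz_j)$, whose sign is unclear; this is exactly why edge-maximization and $\lambda_1$-maximization both predict the wrong extremal graph here. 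The paper avoids switching entirely at this stage: it expands $S(G)=2\sqrt{n-1}+\frac{2c_2}{\sqrt{n-1}}+\cdots$ with $c_2=-\frac38\bigl(\frac{a_1}{n-1}\bigr)^2+\frac12\frac{a_2}{n-1}$, where $a_k=\one'A_H^k\one$, shows the maximizer must have $a_1=\frac23(t-1)(n-1)+O(n^{1/2})$ and $a_2=(t-1)a_1$ \emph{exactly} (forcing every nontrivial component of $H$ to be $(t-1)$-regular), and then rules out non-complete $(t-1)$-regular components by a common-neighborhood argument that builds $K_{2,t}$-minors using the apex $u_0$. Some argument of this strength is required; without it your reduction to $G_m$ is not established, and everything downstream is conditional on it.
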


We give a list of values of $\xi_t$ for small $t$ in Table~\ref{tab:ct}. 

\begin{table}[hbt]
    \centering
    \begin{tabular}{|c|c|c|c|c|c|c|c|c|c|c|c|c|c|c|c|c|c|c|c|}
    \hline
    \hline
$t$  & 2 & 3 & 4 & 5 & 6 & 7 & 8 & 9 & 10 & 11 & 12 & 13 & 14 & 15 & 16 & 17 & 18 & 19 & 20\\
\hline
$\xi_t$& 0 & 1 & 2 & 1 & 0 & 0 &-2 & -3 & -6 &-8 &-12 & -15 &-20 & -24 & -28 & -34 & -40 & -46 & -54 \\
\hline
    \end{tabular}
    \caption{The values of $\xi_t$ for $2\leq t \leq 20$.}
    \label{tab:ct}
\end{table}

Our paper is organized as follows. In Section \ref{sec:notation_lemmas}, we recall some useful lemmas and prove that in any maximum-spread $K_{2, t}$-minor-free graph $G$, there is a vertex $u_0$ which is adjacent to all other vertices in $G$. In Section \ref{sec:main_thm}, we show that $G - u_0$ is a disjoint union of cliques on $t$ vertices and isolated vertices and complete the proof of Theorem~\ref{thm:main}. 

\section{Notations and lemmas}\label{sec:notation_lemmas}
We first recall a result of Chudnovsky, Reed and Seymour~\cite{CRS2011} on the maximum number of edges of a $K_{2,t}$-minor-free graph, which extends an earlier result of Myers \cite{Myers2008}.
\begin{theorem}\cite{CRS2011}\label{thm:CRS_extremal}
Let $t \ge 2$ be a positive integer, and $G$ be a graph on $n>0$ vertices with no $K_{2,t}$ minor. Then
$$|E(G)|\leq \frac{1}{2}(t+1)(n-1).$$
\end{theorem}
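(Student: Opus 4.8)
The plan is to argue by induction on $n$, reducing first to the $2$-connected case. If $G$ is disconnected with components of sizes $n_1,\dots,n_c$, the inductive bounds give $|E(G)|\le \sum_i \frac{t+1}{2}(n_i-1)=\frac{t+1}{2}(n-c)\le \frac{t+1}{2}(n-1)$. If $G$ is connected but has a cut vertex, its block (maximal $2$-connected subgraph) decomposition $B_1,\dots,B_k$ with $|V(B_i)|=n_i$ satisfies the block-tree identity $\sum_i(n_i-1)=n-1$, while $|E(G)|=\sum_i|E(B_i)|$; each $B_i$ is a strictly smaller $K_{2,t}$-minor-free graph, so the inductive bounds again sum to exactly $\frac{t+1}{2}(n-1)$. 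Hence we may assume $G$ is $2$-connected. It is worth noting that the extremal block is $K_{t+1}$: it is $K_{2,t}$-minor-free merely because $|V(K_{t+1})|=t+1<t+2=|V(K_{2,t})|$, and it meets the bound with equality, which foreshadows that equality should force every block to be a $K_{t+1}$.

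The main tool for detecting the forbidden minor is the following observation: if two vertices $x,y$ of $G$ are joined by $t$ internally disjoint paths, each of length at least $2$, then $G$ has a $K_{2,t}$ minor — take $\{x\}$ and $\{y\}$ as the two branch sets on the small side and contract the interior of each path to a branch set on the large side. Consequently, in a $K_{2,t}$-minor-free graph no pair $x,y$ admits $t$ such paths, so by Menger's theorem the relevant local connectivities are capped, and it is exactly this scarcity of internally disjoint paths that I intend to convert into an edge bound.

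For the $2$-connected case I would induct using a vertex $v$ of minimum degree $d$. If $d\le \frac{t+1}{2}$, then $G-v$ is $K_{2,t}$-minor-free on $n-1$ vertices and the inductive bound gives $|E(G)|=|E(G-v)|+d\le \frac{t+1}{2}(n-2)+\frac{t+1}{2}=\frac{t+1}{2}(n-1)$. The whole difficulty is therefore concentrated in the regime $d>\frac{t+1}{2}$, where deleting $v$ costs more than the per-vertex budget and a compensating amount of slack must be recovered. Two complementary devices are natural here. First, instead of deleting one may contract an edge $xy$: since $G/xy$ is a minor of $G$ it stays $K_{2,t}$-minor-free, and $|E(G/xy)|=|E(G)|-1-c$ where $c=|N(x)\cap N(y)|$, so any edge with $c\le \frac{t-1}{2}$ common neighbors drives the induction through. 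Second, when no such cheap edge exists — i.e. every edge already has $c>\frac{t-1}{2}$ common neighbors — each edge $xy$ supplies roughly $t/2$ internally disjoint length-$2$ paths $x,w_i,y$, and I would exploit $2$-connectivity together with the abundance of vertices ($n\ge t+2$) to route the remaining internally disjoint $x$–$y$ paths, producing the $t$-path configuration of the previous paragraph and contradicting $K_{2,t}$-minor-freeness.

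I expect the genuine obstacle to be precisely this last case: showing that high minimum degree together with $2$-connectivity forces a $K_{2,t}$ minor, or else leaves enough structural slack — a cut vertex or a small block appearing inside $G-v$ — to absorb the deficit $d-\frac{t+1}{2}$. Making the path-routing work uniformly in $t$, rather than only for large $t$ as in Myers' argument, is the delicate point, and a careful case analysis on the structure of $N(v)$ and of its connections in $G-v$ seems unavoidable there. The equality discussion would then reduce to tracking where slack is lost, confirming that it vanishes only when every block of $G$ is a copy of $K_{t+1}$.
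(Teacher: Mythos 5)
This statement is Theorem~2 of the paper, which is quoted verbatim from Chudnovsky--Reed--Seymour \cite{CRS2011}; the paper gives no proof of it, so there is no in-paper argument to compare against, and your proposal must stand on its own. Its reductions are sound as far as they go: the component and block decompositions correctly reduce to the $2$-connected case, deleting a vertex of degree at most $\frac{t+1}{2}$ closes the induction, and contracting an edge $xy$ with $|N(x)\cap N(y)|\le\frac{t-1}{2}$ loses exactly $1+|N(x)\cap N(y)|\le\frac{t+1}{2}$ edges and one vertex, which also closes the induction. The observation that $t$ internally disjoint $x$--$y$ paths of length at least $2$ yield a $K_{2,t}$ minor is likewise correct.

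The genuine gap is the final case, which is where the entire content of the theorem lives: $G$ is $2$-connected, $\delta(G)>\frac{t+1}{2}$, and every edge has more than $\frac{t-1}{2}$ common neighbors. Your plan there is to ``route the remaining internally disjoint paths'' and derive a contradiction with $K_{2,t}$-minor-freeness, but no such contradiction is available in general: $K_{t+1}$ itself satisfies all three hypotheses of this case (it is $2$-connected, has minimum degree $t$, and every edge has $t-1$ common neighbors), is $K_{2,t}$-minor-free, and attains the bound $\frac{t+1}{2}(n-1)$ with equality. Any two of its vertices are joined by only $t-1$ internally disjoint paths of length $2$, so the $t$-path configuration never materializes. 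Hence the hard case cannot be eliminated by contradiction; one must instead prove that every graph in it still satisfies the edge bound, typically by locating structural slack (a small separator, a low-degree vertex in a subconfiguration, or a $K_{t+1}$-block) that compensates for the deficit $d-\frac{t+1}{2}$. That is precisely the lengthy structural analysis occupying most of \cite{CRS2011} (and, for large $t$, of Myers \cite{Myers2008}), and your proposal explicitly defers it rather than supplying it. As written, the argument establishes the theorem only for graphs that avoid this case, so it is an outline with the central step missing rather than a proof.
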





Let $G$ be a graph which attains the maximum spread among all $n$-vertex $K_{2,t}$-minor-free graphs. 
As a first step towards proving Theorem \ref{thm:main}, we want to show that $G$ must contain a vertex of degree $n-1$. 

Recall the result of Nikiforov \cite{Nikiforov2017} on the maximum spectral radius of $K_{2,t}$-minor-free  graphs.

\begin{theorem}\cite{Nikiforov2017}\label{thm:nikiforov}
Let $t\geq 3$ and $G$ be a graph of order $n$ with no $K_{2,t}$ minor. If $n\geq 400t^6$, then the spectral radius $\lambda_1(G)$ satisfies
$$\lambda_1(G)\leq \frac{t-1}{2}+\sqrt{n+\frac{t^2-2t-3}{4}},$$
with equality if and only if $n\equiv 1\pmod{t}$ and $G=K_1\vee \lfloor n/t\rfloor K_t$.
\end{theorem}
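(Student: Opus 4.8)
The plan is to prove matching lower and upper bounds on $\lambda_1(G)$ and then read off the equality case from a chain of inequalities that must be tight. First I would verify that the candidate $G_0 = K_1 \vee kK_t$ with $n = 1+kt$ attains the claimed value. Writing the Perron entry as $a$ on the dominating vertex and $b$ on each clique vertex (all clique vertices are equivalent), the eigenvalue equations read $\lambda a = kt\,b$ and $\lambda b = a + (t-1)b$. Eliminating $a$ and $b$ gives $\lambda^2 - (t-1)\lambda - kt = 0$, i.e. $\lambda^2 - (t-1)\lambda = n-1$, whose positive root is exactly $\frac{t-1}{2} + \sqrt{n + \frac{t^2-2t-3}{4}}$, using the identity $\frac{t^2-2t-3}{4} = \frac{(t-1)^2}{4} - 1$. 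So it suffices to prove the inequality $\lambda_1(G)^2 - (t-1)\lambda_1(G) \le n-1$ for every $n$-vertex $K_{2,t}$-minor-free graph $G$, with equality only for $G_0$ (and I may assume $G$ connected, since otherwise the component attaining $\lambda_1$ has fewer vertices and the bound, being increasing in $n$, holds strictly).

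For the upper bound, let $\mathbf{x}$ be the Perron eigenvector of $G$ normalized so that $\max_v x_v = x_u = 1$, and set $\lambda = \lambda_1(G)$. The single structural input I need is that $K_{2,t}$-minor-freeness forbids $K_{2,t}$ as a subgraph, so any two vertices share at most $t-1$ common neighbors; in particular $c_w := |N(u) \cap N(w)| \le t-1$ for every $w \ne u$. Counting walks of length two from $u$ gives the identity $\lambda^2 = \lambda^2 x_u = d(u) + \sum_{w \ne u} c_w x_w$, while the eigenvalue equation at $u$ gives $(t-1)\lambda = (t-1)\sum_{v \in N(u)} x_v = \sum_{w \in N(u)} (t-1)x_w$. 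Subtracting and grouping the $w \in N(u)$ terms yields
$$\lambda^2 - (t-1)\lambda = d(u) + \sum_{w \in N(u)} \bigl(c_w - (t-1)\bigr) x_w + \sum_{w \notin N(u)\cup\{u\}} c_w x_w.$$
Since $c_w \le t-1$, the middle sum is nonpositive and may be dropped. Writing $m = n-1-d(u)$ for the number of non-neighbors of $u$, the target bound $\lambda^2 - (t-1)\lambda \le n-1$ reduces to the single inequality $\sum_{w \notin N(u)\cup\{u\}} c_w x_w \le m$.

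The hard part is exactly this last inequality, namely controlling the Perron mass on the non-neighbors of $u$. The naive estimate $c_w x_w \le (t-1)\cdot 1$ is off by a factor of $t-1$ and fails when $m$ is small, so I cannot merely use $x_w \le 1$; I must show that non-neighbors carry genuinely small weight, and this is where the hypothesis $n \ge 400t^6$ is used. The bound is trivial unless $\lambda > \sqrt{n-1}$, so I may assume $\lambda \ge 20t^3$. Then the equation $x_w = \frac{1}{\lambda}\sum_{v \sim w} x_v$ forces each non-neighbor entry to be small once the total weight and degrees feeding into it are controlled: I would bound $\sum_{w \notin N(u)\cup\{u\}} c_w x_w$ by separating the edges leaving $N(u)$ from the edges inside the non-neighbor set, invoking the Chudnovsky--Reed--Seymour bound $e(G) \le \frac{t+1}{2}(n-1)$ to keep the edge count linear in $n$ and $c_w \le t-1$ to cap how much weight each non-neighbor can draw from $N(u)$, and then dividing by $\lambda \ge 20t^3$. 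Making this estimate strong enough to succeed \emph{uniformly}, in particular in the near-extremal regime where $m$ is a small positive number, is the genuine technical obstacle; I expect it to require a finer two-case split according to whether $u$ is nearly dominating.

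Finally, I would extract the equality case by forcing every inequality above to be tight. Equality requires the non-neighbor contribution to equal $m$, which (since those entries are strictly smaller than $1$ for $n$ large) forces $m = 0$, so $u$ is adjacent to all other vertices and $d(u) = n-1$; it also forces the middle sum to vanish, so $c_w = t-1$ for every $w \in N(u)$, i.e. $G[N(u)] = G-u$ is $(t-1)$-regular. To finish I would prove the structural lemma that a connected $(t-1)$-regular graph on more than $t$ vertices, together with a vertex adjacent to all of it, contains a $K_{2,t}$ minor (one takes two of its vertices as the branch sets of the $2$-side, the apex as one branch set of the $t$-side, and realizes the remaining $t-1$ branch sets inside the regular graph). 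Consequently every component of $G-u$ has exactly $t$ vertices and hence is $K_t$, giving $G = K_1 \vee kK_t$ with $n-1 = kt$, that is $n \equiv 1 \pmod t$, which matches $G_0$ and completes the characterization.
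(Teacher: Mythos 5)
First, a point of comparison: the paper does not prove this statement at all. It is Nikiforov's theorem, imported verbatim from \cite{Nikiforov2017} and used as a black box (to upper-bound $\lambda_1(G)$ in Lemma~\ref{lem:lambdan}), so there is no in-paper proof to measure your attempt against. Judged on its own, your skeleton is the standard and correct one: the computation of $\lambda_1(K_1\vee kK_t)$ is right, and the walk-counting identity
$\lambda^2-(t-1)\lambda = d(u)+\sum_{w\in N(u)}\bigl(c_w-(t-1)\bigr)x_w+\sum_{w\notin N(u)\cup\{u\}}c_wx_w$,
combined with the codegree bound $c_w\le t-1$, correctly reduces the whole inequality to $\sum_{w\notin N(u)\cup\{u\}}c_wx_w\le m$.

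Two steps are left unexecuted, and they are of different severity. (a) The reduced inequality, which you flag as the ``genuine technical obstacle'' possibly needing a two-case split, in fact closes in three lines with exactly the ingredients you name: for $w\notin N(u)\cup\{u\}$ one has $x_w\le d(w)/\lambda$, and writing $W$ for the set of non-neighbors, $\sum_{w\in W}d(w)=2|E(G[W])|+|E(W,N(u))|\le (t+1)(m-1)+(t-1)m<2tm$ by Theorem~\ref{thm:CRS_extremal} applied to $G[W]$ and by $c_w\le t-1$; hence $\sum_{w\in W}c_wx_w\le 2t(t-1)m/\lambda<m$ once $\lambda>2t^2$, which $n\ge 400t^6$ guarantees. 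Since the bound is proportional to $m$, the near-extremal regime of small positive $m$ causes no trouble and no case split is needed. (b) The genuine gap is the structural lemma in the equality analysis. You assert in one sentence that a connected $(t-1)$-regular component $K$ on more than $t$ vertices, together with the dominating apex, yields a $K_{2,t}$ minor by ``realizing the remaining $t-1$ branch sets inside'' $K$. This is true but not automatic: $K$ itself need not contain a $K_{1,t}$ minor (a long cycle is $2$-regular and $K_{1,3}$-minor-free), so the apex must genuinely be spent on the $t$-side and one must exhibit $t-1$ disjoint connected subgraphs of $K$ each meeting the neighborhoods of both chosen vertices of the $2$-side. The analogous statement in this paper --- that the $(t-1)$-regular components of $G-u_0$ must be copies of $K_t$, proved as the final Claim inside Lemma~\ref{l:k2t} --- requires roughly a page of codegree case analysis; your proposal needs a comparable argument here before the equality characterization is established.
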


We first give some upper and lower bounds on $\lambda_1(G)$ and $|\lambda_n(G)|$ when $n$ is sufficiently large. We use known expressions for the eigenvalues of a join of two regular graphs~\cite[pg.19]{BH2012}.

\begin{lemma}\cite{BH2012}\label{lem:joinreglemma}
Let $G$ and $H$ be regular graphs with degrees $k$ and $\ell$ respectively. Suppose that $|V(G)| = m$ and $|V(H)| = n$. Then, the characteristic polynomial of $G\vee H$ is $p_{G\vee H}(t) = ((t-k)(t-\ell)-mn)\frac{p_G(t)p_H(t)}{(t-k)(t-\ell)}$. In particular, if the eigenvalues of $G$ are $k = \lambda_1 \ge \ldots \ge \lambda_m$ and the eigenvalues of $H$ are $\ell = \mu_1 \ge \ldots \geq \mu_n$, then the eigenvalues of $G\vee H$ are $\{\lambda_i: 2\le i\le m\} \cup \{\mu_j: 2\le j\le n\} \cup \{x: (x-k)(x-\ell)-mn = 0\}$. 
\end{lemma}

We will apply Lemma~\ref{lem:joinreglemma} to the graph  $K_1\vee qK_t$ to obtain a lower bound on $S(G)$.

\begin{lemma}\label{lem:lambdan}
Let $G$ be a graph which attains the maximum spread among all $n$-vertex $K_{2,t}$-minor-free graphs. Then 
\[\sqrt{n-1} - \frac{t-1}{2}-O\left(\frac{1}{\sqrt{n}}\right) \le -\lambda_n(G) \le \lambda_1(G) \le \sqrt{n-1} +\frac{t-1}{2}+O\left(\frac{1}{\sqrt{n}}\right)  .\]
\end{lemma}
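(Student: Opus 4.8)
These three inequalities are of quite different character, and I would dispatch them separately. The middle inequality $-\lambda_n(G)\le\lambda_1(G)$ holds for \emph{every} graph and requires nothing about $G$: since $A(G)$ is entrywise nonnegative, the Perron--Frobenius theorem identifies its spectral radius with its largest eigenvalue, so $\lambda_1(G)=\rho(A(G))=\max_i|\lambda_i(G)|\ge|\lambda_n(G)|=-\lambda_n(G)$, the last equality because $\operatorname{tr}A(G)=0$ forces $\lambda_n(G)\le 0$. For the rightmost inequality I would feed Nikiforov's bound (Theorem~\ref{thm:nikiforov}) through a single Taylor expansion. Rewriting the constant as $n+\frac{t^2-2t-3}{4}=(n-1)+\frac{(t-1)^2}{4}$ turns that bound into $\lambda_1(G)\le\frac{t-1}{2}+\sqrt{(n-1)+\frac{(t-1)^2}{4}}$, and $\sqrt{(n-1)+c}=\sqrt{n-1}+\frac{c}{2\sqrt{n-1}}+O(n^{-3/2})$ with $c=(t-1)^2/4$ gives $\lambda_1(G)\le\sqrt{n-1}+\frac{t-1}{2}+O(n^{-1/2})$ at once. (Theorem~\ref{thm:nikiforov} is stated for $t\ge 3$; for $t=2$ one substitutes the analogous spectral bound for $K_{2,2}$-minor-free graphs, whose asymptotics agree.)

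The real content is the leftmost inequality, and the plan is to extract it from the \emph{optimality} of $G$. Writing $-\lambda_n(G)=S(G)-\lambda_1(G)$ and using the upper bound on $\lambda_1(G)$ just obtained, it suffices to show $S(G)\ge 2\sqrt{n-1}-O(n^{-1/2})$; subtracting then leaves precisely $-\lambda_n(G)\ge\sqrt{n-1}-\frac{t-1}{2}-O(n^{-1/2})$. Because $G$ maximizes the spread, $S(G)$ dominates the spread of any $n$-vertex $K_{2,t}$-minor-free competitor, so I only need to exhibit one good competitor. I would take $H_0=K_1\vee qK_t$ with $q=\lfloor(n-1)/t\rfloor$, padded by $s=n-1-qt$ isolated vertices to reach exactly $n$ vertices; this $H$ is $K_{2,t}$-minor-free (the graph $K_1\vee qK_t$ is the extremal family appearing in Theorem~\ref{thm:nikiforov}, and adjoining isolated vertices preserves the property), and since padding only appends the eigenvalue $0$ while $H_0$ already has $\lambda_1(H_0)>0>\lambda_{\min}(H_0)$, we have $S(H)=S(H_0)$.

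To compute $S(H_0)$ I would apply Lemma~\ref{lem:joinreglemma} with $G=K_1$ (degree $0$, $m=1$) and the $(t-1)$-regular graph $qK_t$ on $qt$ vertices. All the ``bulk'' eigenvalues produced by the lemma lie in $\{t-1,-1\}$, while the two extreme eigenvalues are the roots of $x^2-(t-1)x-qt=0$, namely $\frac{(t-1)\pm\sqrt{(t-1)^2+4qt}}{2}$; their difference gives the clean identity $S(H_0)=\sqrt{(t-1)^2+4qt}$. Using $qt=t\lfloor(n-1)/t\rfloor\ge n-t$ and expanding once more yields $S(H_0)\ge\sqrt{4(n-1)+(t-1)(t-5)}=2\sqrt{n-1}-O(n^{-1/2})$, which closes the chain.

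The only obstacle I anticipate is bookkeeping rather than ideas: one must confirm that $\frac{(t-1)-\sqrt{(t-1)^2+4qt}}{2}$ really is the \emph{smallest} eigenvalue of $H_0$ (it is, since for large $n$ it sits far below $-1$, whereas every bulk eigenvalue is at least $-1$), and one must carry the $O(n^{-1/2})$ error terms consistently through the two square-root expansions. The conceptual crux is simply the observation that maximality of $G$ converts the easily computed spread of the natural construction $K_1\vee qK_t$ into the desired lower bound on $-\lambda_n(G)$.
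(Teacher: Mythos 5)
Your proof is correct and follows essentially the same route as the paper's: bound $\lambda_1(G)$ via Nikiforov's theorem, lower-bound $S(G)$ by the spread of the competitor $K_1\vee \lfloor (n-1)/t\rfloor K_t$ computed from the join lemma, and subtract to get the bound on $-\lambda_n(G)$. Your additional remarks (padding with isolated vertices, verifying which root is the smallest eigenvalue, and the $t=2$ caveat for Nikiforov's theorem) are points the paper glosses over but do not change the argument.
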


\begin{proof}
The upper bound of $\lambda_1(G)$ is due to Theorem \ref{thm:nikiforov}. Now let us prove the lower bound.
We will compute $S(K_1\vee qK_t)$, where $q = \floor{(n-1)/t}$. Note that $K_1\vee qK_t$ is $K_{2,t}$-minor-free. Hence, we can lower bound $S(G)$ by $S(K_1\vee qK_t)$.
By Lemma~\ref{lem:joinreglemma}, both $\lambda_1(K_1\vee qK_t)$ and $\lambda_n(K_1\vee qK_t)$ satisfy
the equation
$$\lambda(\lambda-(t-1))-qt=0.$$
Thus, we have
\begin{align*}
    \lambda_1(K_1\vee qK_t) &= \frac{t-1}{2}+\sqrt{qt+\frac{t^2-2t+1}{4}},\\
    \lambda_n(K_1\vee qK_t)&= \frac{t-1}{2}-\sqrt{qt+\frac{t^2-2t+1}{4}}.
\end{align*}
Thus $S(K_1\vee qK_t)=\sqrt{4qt+t^2-2t+1}$. Since $q=\lfloor (n-1)/t\rfloor$, we then have
$$S(G)\geq \sqrt{4qt+t^2-2t+1}\geq \sqrt{4(n-t)+t^2-2t+1}=\sqrt{4n+t^2-6t+1}=2\sqrt{n-1}+O\left(\frac{1}{\sqrt{n}}\right).
$$
Therefore, 
\begin{align*}
-\lambda_n(G) &= S(G)-\lambda_1(G) \\
&\geq 2\sqrt{n-1}+O\left(\frac{1}{\sqrt{n}}\right) - \left(\sqrt{n-1} +\frac{t-1}{2}+O\left(\frac{1}{\sqrt{n}}\right) \right)\\ &=\sqrt{n-1} - \frac{t-1}{2}-O\left(\frac{1}{\sqrt{n}}\right).
\end{align*}
\end{proof}

For the rest of this paper, let $\lambda_1\geq \cdots \geq \lambda_n$ be the eigenvalues of the adjacency matrix $A(G)$ of $G$. Given a vector $\cw \in \mathbb{R}^n$, let $\cw'$ denotes its transpose, and for each $i\in [n]$, let $\cw_i$ denote the $i$-th coordinate of $\cw$. 
Using the Rayleigh quotient of symmetric matrices, we have the following equalities for $\lambda_1$ and $\lambda_n$:
    \begin{align}
        \label{Rayleigh1}
        \lambda_1 &= \max_{\substack{\cw \in \mathbb{R}^n\\\cw\neq 0}} \frac{\cw' A(G) \cw}{\cw'\cw} = \max_{\substack{\cw \in \mathbb{R}^n\\\cw\neq 0}} \frac{2\sum_{ij\in E(G)} \cw_i \cw_j}{\cw'\cw}, \\
        \label{Rayleighn}
         \lambda_n &= \min_{\substack{\cw \in \mathbb{R}^n\\\cw\neq 0}}  \frac{\cw' A(G) \cw}{\cw'\cw} = \min_{\substack{\cw \in \mathbb{R}^n\\\cw\neq 0}} \frac{2\sum_{ij\in E(G)} \cw_i \cw_j}{\cw'\cw}.
    \end{align}

Let ${\bf x}$ and ${\bf z}$ be the eigenvectors of $A(G)$ corresponding to the eigenvalues $\lambda_1$ and $\lambda_n$ respectively. For convenience, let ${\bf x}$ and ${\bf z}$ be indexed by the vertices of $G$. By the Perron-Frobenius theorem, we may assume that all entries of ${\bf x}$ are positive. We also assume that $\cx$ and $\cz$ are normalized so that the maximum absolute values of the entries of $\cx$ and $\cz$ are equal to $1$, and so
there are vertices $u_0$ and $w_0$ with ${\bf x}_{u_0} = \cz_{w_0} = 1$. 

Let $V_+=\{v\colon {\bf z}_v> 0\}$, $V_0=\{v\colon {\bf z}_v= 0\}$,
and $V_-=\{v\colon {\bf z}_v < 0\}$. 
Since $\cz$ is a non-zero vector, at least one of $V_{+}$ and $V_{-}$ is non-empty. By considering the eigen-equations of $\lambda_n \sum_{v\in V_{+}} \cz_v$ or $\lambda_n \sum_{v\in V_{-}} \cz_v$, we obtain that both $V_{+}$ and $V_{-}$ are non-empty.
For any vertex subset $S$, we define the \textit{volume} of $S$, denoted by $\vol(S)$, as 
$\vol(S)= \sum_{v\in S} |\cz_v|$. 
In the following lemmas, we use the bounds of $\lambda_n$ to deduce some information on $V_{+}$, $V_{-}$ and $V_0$.

\begin{lemma}
We have
$$\vol(V(G))=O(\sqrt{n}).$$
\end{lemma}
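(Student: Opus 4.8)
The plan is to combine the eigen-equation for $\cz$ with the two quantitative facts already established: the near-tight edge bound of Theorem~\ref{thm:CRS_extremal}, and the estimate $|\lambda_n(G)| = \Theta(\sqrt{n})$ coming from Lemma~\ref{lem:lambdan}. The normalization $\max_v |\cz_v| = 1$ is the hinge that lets these two facts interact: it is a genuinely linear (in $n$) edge count playing against a $\Theta(\sqrt{n})$ eigenvalue that will produce the $\sqrt{n}$ savings.

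First I would write the eigen-equation at each vertex $v$, namely $\lambda_n \cz_v = \sum_{u \in N(v)} \cz_u$, and apply the triangle inequality to obtain $|\lambda_n|\,|\cz_v| \le \sum_{u \in N(v)} |\cz_u|$. Summing this inequality over all $v \in V(G)$ and interchanging the order of summation on the right gives
$$|\lambda_n|\,\vol(V(G)) = |\lambda_n| \sum_{v} |\cz_v| \le \sum_{v} \sum_{u \in N(v)} |\cz_u| = \sum_{u} d(u)\,|\cz_u|,$$
since for each fixed $u$ the term $|\cz_u|$ is counted exactly $d(u)$ times. Next I would use the normalization $|\cz_u| \le 1$ for every $u$ to bound the right-hand side by $\sum_u d(u) = 2|E(G)|$, which Theorem~\ref{thm:CRS_extremal} bounds by $(t+1)(n-1)$. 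This yields
$$\vol(V(G)) \le \frac{(t+1)(n-1)}{|\lambda_n|}.$$
Finally, the lower bound $|\lambda_n| \ge \sqrt{n-1} - \tfrac{t-1}{2} - O(1/\sqrt{n})$ from Lemma~\ref{lem:lambdan} shows the denominator is $(1-o(1))\sqrt{n-1}$, so for fixed $t$ the right-hand side is $O(\sqrt{n})$, as desired.

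There is no serious obstacle here; once the two ingredients are in hand the argument is essentially a one-line manipulation of the eigen-equation. The only point demanding a little care is the choice to bound $|\cz_u| \le 1$ crudely rather than reaching for Cauchy--Schwarz: this replaces the degree-weighted sum by exactly $2|E(G)|$, keeping it linear in $n$, and it is precisely the $\sqrt{n}$ gap between this linear edge count and the $\Theta(\sqrt{n})$ magnitude of $\lambda_n$ that forces the volume down to $O(\sqrt{n})$. A secondary detail is confirming $\lambda_n < 0$ (so that $|\lambda_n| = -\lambda_n$ matches the bound from Lemma~\ref{lem:lambdan}), which holds for any graph with an edge and for $n$ large makes the displayed lower bound positive.
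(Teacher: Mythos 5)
Your proposal is correct and follows essentially the same route as the paper: both apply the eigen-equation $\lambda_n \cz_v = \sum_{u\in N(v)}\cz_u$ together with the normalization $|\cz_u|\le 1$ to get $|\lambda_n|\,\vol(V(G)) \le \sum_v d(v) \le (t+1)(n-1)$ via Theorem~\ref{thm:CRS_extremal}, and then divide by the $\Theta(\sqrt{n})$ lower bound on $|\lambda_n|$ from Lemma~\ref{lem:lambdan}. The only cosmetic difference is that you sum first and then apply $|\cz_u|\le 1$, whereas the paper bounds each vertex's contribution by $d(v)$ before summing.
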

\begin{proof}
For any vertex $v \in V(G)$, we have
$$d(v) \geq |\sum_{y\in N(v)}z_y|=|\lambda_n| |z_v|.$$
Applying Theorem \ref{thm:CRS_extremal}, we have
$$(t+1)n\geq \sum_{v\in V}d(v) \geq \sum_{v\in V(G)}  |\lambda_n| |z_v|
=|\lambda_n| \vol(V).$$
By Lemma \ref{lem:lambdan}, $|\lambda_n|\geq \sqrt{(n-1)} - \frac{t-1}{2}-O\left(\frac{1}{\sqrt{n}}\right)$.
We thus have $\vol(V)=O(\sqrt{n})$.
\end{proof}

\begin{lemma}\label{lem:almostfulldegree}
There exists some constant $C_1$ such that for all $n$ sufficiently large, we have
\begin{enumerate}
    \item $d(w_0)\geq n- C_1\sqrt{n}$.
    \item For any vertex $u\not=w_0$, $d(u)\leq 2C_1\sqrt{n}$ and $|z_u|=O(\frac{1}{\sqrt{n}})$.
\end{enumerate}
\end{lemma}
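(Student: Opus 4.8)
The plan is to exploit the square of the adjacency matrix evaluated at the special coordinate $w_0$, combined with the fact that $K_{2,t}$-minor-freeness forces any two vertices to share at most $t-1$ common neighbors. The starting point is that for distinct vertices $a,b$ the entry $(A(G)^2)_{a,b}$ equals $|N(a)\cap N(b)|$, while the diagonal entry $(A(G)^2)_{a,a}$ equals $d(a)$. Since $t$ common neighbors of $a$ and $b$ would yield a $K_{2,t}$ subgraph, and hence a $K_{2,t}$ minor, we have $|N(a)\cap N(b)|\le t-1$ for every pair $a\ne b$. This single combinatorial input, together with $\vol(V(G))=O(\sqrt n)$ from the previous lemma and the bounds on $\lambda_n$ from Lemma~\ref{lem:lambdan}, drives the entire argument, so no further appeal to extremality of $G$ is needed here.

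First I would prove part (1). Applying the identity $A(G)^2\cz=\lambda_n^2\cz$ at the coordinate $w_0$ (where $z_{w_0}=1$) gives $\lambda_n^2 = d(w_0) + \sum_{v\ne w_0} |N(w_0)\cap N(v)|\, z_v$. The error sum is bounded in absolute value by $(t-1)\sum_{v\ne w_0}|z_v|\le (t-1)\vol(V(G)) = O(\sqrt n)$, so $d(w_0)=\lambda_n^2-O(\sqrt n)$. Since Lemma~\ref{lem:lambdan} gives $\lambda_n^2\ge \big(\sqrt{n-1}-\tfrac{t-1}{2}-O(1/\sqrt n)\big)^2 = n-O(\sqrt n)$, this at once yields $d(w_0)\ge n-C_1\sqrt n$ for a suitable constant $C_1$.

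Next, for the degree bound in part (2), I would use that part (1) forces the set $\bar N$ of non-neighbors of $w_0$ (other than $w_0$ itself) to satisfy $|\bar N|\le C_1\sqrt n$. For any $u\ne w_0$, every neighbor of $u$ is either a common neighbor of $u$ and $w_0$ (at most $t-1$ of these, by minor-freeness) or lies in $\bar N\cup\{w_0\}$ (at most $C_1\sqrt n+1$ of these). Hence $d(u)\le (t-1)+C_1\sqrt n+1\le 2C_1\sqrt n$ once $n$ is large, since $t=O(1)$.

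Finally, for the eigenvector bound $|z_u|=O(1/\sqrt n)$, I would rerun the same $A(G)^2$ identity at a general coordinate $u\ne w_0$, obtaining $(\lambda_n^2-d(u))\,z_u=\sum_{v\ne u}|N(u)\cap N(v)|\,z_v$, whose right-hand side is again at most $(t-1)\vol(V(G))=O(\sqrt n)$ in absolute value. Since $d(u)=O(\sqrt n)$ while $\lambda_n^2=n-O(\sqrt n)$, the factor $\lambda_n^2-d(u)$ equals $n-O(\sqrt n)$ and is bounded away from $0$, so $|z_u|\le O(\sqrt n)/(n-O(\sqrt n))=O(1/\sqrt n)$. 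The main conceptual step is the first one: recognizing that $\lambda_n^2$ is essentially $n$ while the walk-counting expansion of $(A(G)^2\cz)_{w_0}$ isolates $d(w_0)$ up to an $O(\sqrt n)$ error that the common-neighbor bound controls. Everything afterward is bookkeeping, the only mild care being to confirm that $\lambda_n^2-d(u)>0$ for large $n$ so that the final division is legitimate.
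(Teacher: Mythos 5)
Your proof is correct and follows essentially the same route as the paper's: both arguments apply the eigen-equation twice at $w_0$ (equivalently, expand $(A(G)^2\cz)_{w_0}=\lambda_n^2 \cz_{w_0}$), control the off-diagonal contribution via the common-neighbour bound $|N(a)\cap N(b)|\le t-1$ forced by $K_{2,t}$-minor-freeness together with $\vol(V(G))=O(\sqrt n)$, and then obtain the degree and eigenvector bounds for $u\ne w_0$ by the same counting of neighbours inside and outside $N(w_0)$. Your packaging through the exact identity $(A(G)^2)_{ab}=|N(a)\cap N(b)|$ is a slightly cleaner version of the paper's chain of one-sided inequalities through $V_+$ and $V_-$, but the substance is identical.
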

\begin{proof}
For any $u\in V_+$, we have
$$|\lambda_n| z_u =-\lambda_n z_u =  -\sum_{v \in N(u)} z_v \leq \sum_{v \in N(u)\cap V_-} |z_v|.$$
Therefore, for any $u\in V_{+}$,
\begin{align*}
|\lambda_n|^2 z_u \leq \sum_{v \in N(u)\cap V_-} |\lambda_n| |z_v|
& = \sum_{v \in N(u)\cap V_-} \lambda_n z_v\\
&\leq  \sum_{v \in N(u)\cap V_-} \sum_{y\in N(v)\cap V_+} z_y \\
&\leq d(u) z_u + \sum_{y\in V_+\setminus \{u\}} z_y |N(y)\cap N(u)\cap V_-|\\
&\leq  d(u) z_u + \sum_{y\in V_+\setminus \{u\}} z_y (t-1) \quad \textrm{since $G$ is $K_{2,t}$-minor-free}\\
&\leq  d(u) z_u + (t-1)\vol(V_+).
\end{align*}
Similarly, if $u\in V_-$, we have
$$ |\lambda_n|^2 |z_u| \leq   d(u) |z_u| + (t-1)\vol(V_-). $$
Setting $u=w_0$, we get
$$|\lambda_n|^2-d(w_0)\leq  (t-1)\vol(V)
=O(\sqrt{n}).$$
Hence,
$$d(w_0)\geq n - O(\sqrt{n}) \geq n -C_1\sqrt{n}, \textrm{ for some $C_1 > 0$}.$$ 
Now we show $d(u)\leq 2C_1\sqrt{n}$ for any vertex $u$ other than $w_0$.
Otherwise, if $d(u)\geq 2C_1\sqrt{n}$, then
$u$ and $w_0$ have at least $C_1\sqrt{n}\geq t$ neighbors (when $n$ is sufficiently large).
Thus $G$ contains the subgraph $K_{2,t}$, contradicting that $G$ is $K_{2,t}$-minor-free.
It then follows that for all $u\not= w_0$, we have
$$|z_u|\leq \frac{(t-1)\vol(V)}{|\lambda_n|^2 -d(u)} = O\left(\frac{1}{\sqrt{n}}\right).$$
\end{proof}

\begin{lemma}
We have
\begin{enumerate}[(i)]
    \item $u_0 = w_0$.
    \item For any vertex $v\neq w_0$, $\cx_v = O\lp \frac{1}{\sqrt{n}} \rp$.
\end{enumerate}
\end{lemma}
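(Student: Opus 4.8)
The plan is to re-run the arguments of the previous two lemmas, replacing the bottom eigenvector $\cz$ and $\lambda_n$ by the Perron eigenvector $\cx$ and $\lambda_1$, and then to compare the degrees of $u_0$ and $w_0$ to force them to coincide. First I would record an $\ell_1$-bound on $\cx$ analogous to $\vol(V(G))=O(\sqrt n)$: since every coordinate satisfies $0<\cx_y\le 1$, the eigen-equation gives $\lambda_1\cx_v=\sum_{y\in N(v)}\cx_y\le d(v)$, and summing over all $v$ together with Theorem~\ref{thm:CRS_extremal} yields $\lambda_1\sum_v\cx_v\le \sum_v d(v)\le (t+1)(n-1)$. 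As $\lambda_1\ge\sqrt{n-1}-\frac{t-1}{2}-O(1/\sqrt n)$ by Lemma~\ref{lem:lambdan}, this gives $\sum_v\cx_v=O(\sqrt n)$.

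Next I would iterate the eigen-equation once more. For any vertex $u$,
\[\lambda_1^2\cx_u=\sum_{v\in N(u)}\sum_{y\in N(v)}\cx_y=d(u)\cx_u+\sum_{y\neq u}|N(y)\cap N(u)|\,\cx_y,\]
since $u$ occurs exactly $d(u)$ times as a second-neighbor of itself. Because $G$ is $K_{2,t}$-minor-free, any two vertices have at most $t-1$ common neighbors, so $|N(y)\cap N(u)|\le t-1$, and hence
\[\cx_u\lp\lambda_1^2-d(u)\rp\le (t-1)\sum_{y\neq u}\cx_y=O(\sqrt n).\]
Taking $u=u_0$, where $\cx_{u_0}=1$, and using $\lambda_1^2\ge n-O(\sqrt n)$ gives $d(u_0)\ge n-O(\sqrt n)$; that is, $u_0$ also has almost-full degree, exactly as $w_0$ does in Lemma~\ref{lem:almostfulldegree}.

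For part (i), suppose $u_0\neq w_0$. Then $d(u_0)\ge n-O(\sqrt n)$ and $d(w_0)\ge n-C_1\sqrt n$, so $|N(u_0)\cap N(w_0)|\ge d(u_0)+d(w_0)-n\ge n-O(\sqrt n)\ge t$ for $n$ large. Picking any $t$ common neighbors of $u_0$ and $w_0$ exhibits a $K_{2,t}$ subgraph (hence a $K_{2,t}$ minor) in $G$, a contradiction. Therefore $u_0=w_0$.

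For part (ii), fix $v\neq w_0=u_0$. By Lemma~\ref{lem:almostfulldegree} we have $d(v)\le 2C_1\sqrt n$, so $\lambda_1^2-d(v)\ge n-O(\sqrt n)=\Theta(n)$, and the inequality $\cx_v(\lambda_1^2-d(v))\le (t-1)\sum_{y}\cx_y=O(\sqrt n)$ from the second step yields $\cx_v=O\lp\frac{1}{\sqrt n}\rp$. I expect the only genuinely delicate point to be the degree-overlap step in part (i): it relies on invoking the $K_{2,t}$-minor-free hypothesis a second time (after its use in bounding common neighbors) and on checking that the two high-degree estimates—one newly derived for $u_0$ from $\lambda_1$, the other already known for $w_0$ from $\lambda_n$—together force more than $t$ common neighbors once $n$ is large. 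Everything else is a mechanical transcription of the $\lambda_n$ arguments to the $\lambda_1$ side.
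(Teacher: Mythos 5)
Your proof is correct, but it is organized quite differently from the paper's. The paper proves (ii) first, for an arbitrary $v\neq w_0$, by expanding $\lambda_1^2\cx_v$ through two levels of the eigen-equation while peeling off the contribution of $w_0$ at each level, and then bounding the residual double sum $\sum_{s\in N(v)\setminus\{w_0\}}\sum_{t\in N(s)\setminus\{w_0\}}\cx_t$ by a pure edge count: Theorem~\ref{thm:CRS_extremal} applied to $G[N(v)]$ plus the observation that the bipartite graphs between $N(v)$ and the rest are $K_{1,t}$-free. Part (i) then falls out for free, since $\cx_{u_0}=1$ is incompatible with $\cx_{u_0}=O(1/\sqrt n)$. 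You instead transplant the machinery of the $\lambda_n$-side lemmas to the Perron vector: an $\ell_1$-bound $\sum_v\cx_v=O(\sqrt n)$ playing the role of $\vol(V)=O(\sqrt n)$, the uniform inequality $\cx_u(\lambda_1^2-d(u))\le(t-1)\sum_{y\neq u}\cx_y$ coming from the common-neighborhood bound $|N(u)\cap N(y)|\le t-1$, the conclusion $d(u_0)\ge n-O(\sqrt n)$, and finally a degree-overlap argument forcing $u_0=w_0$ (two vertices of almost-full degree would share at least $t$ common neighbors and produce a $K_{2,t}$ subgraph). Both routes are sound and use the $K_{2,t}$-minor-free hypothesis in essentially the same two ways (edge density and bounded codegree); your version has the advantage of a single clean inequality valid for every vertex $u$ and of making the parallel with Lemma~\ref{lem:almostfulldegree} explicit, while the paper's version gets (i) as an immediate corollary of (ii) and never needs to establish that $u_0$ itself has large degree (that is deferred to Lemma~\ref{lem:max-degree}).
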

\begin{proof}
    We will prove (ii) first. For any $v \in V(G)\backslash \{w_0\}$, we have
    \begin{align}
      \lambda_1^2 x_v &= \lambda_1 \dss_{s\in N(v)} \cx_s \nonumber\\
                      &\leq \lambda_1 \lp \cx_{w_0} + \dss_{s \in N(v) \backslash \{w_0\}} \cx_s\rp \nonumber\\
                      &\leq \lambda_1 + \dss_{s \in N(v) \backslash \{w_0\}} \dss_{t\in N(s)} \cx_t \nonumber\\
                      & \leq \lambda_1 + \dss_{s \in N(v) \backslash \{w_0\}} \lp \cx_{w_0} + \dss_{t\in N(s)\backslash\{w_0\}} \cx_t\rp \nonumber\\
                      & \leq \lambda_1 + (2C_1\sqrt{n})\cx_{w_0} + \dss_{s \in N(v) \backslash \{w_0\}} \dss_{t\in N(s)\backslash\{w_0\}} \cx_t \label{ieq:expansion}
    \end{align}
\begin{claim}
    For any $v\in V(G)\backslash \{w_0\}$, we have
    $\dss_{s \in N(v) \backslash \{w_0\}} \dss_{t\in N(s)\backslash\{w_0\}} \cx_t = O(\sqrt{n})$.
\end{claim}
\begin{proof}
    Observe that 
    \begin{align}
    \dss_{s \in N(v) \backslash \{w_0\}} \dss_{t\in N(s)\backslash\{w_0\}} \cx_t 
        & \leq \dss_{s \in N(v) \backslash \{w_0\}} \dss_{t\in N(s)\backslash\{w_0\}} 1  \nonumber\\
        &= \abs{\{(s,t)\in V(G)^2: s\in N(v)\backslash \{w_0\}, t\in N(s)\backslash \{w_0\}\}}. \nonumber\\
        &\leq 2|E_{G-w_0}(N(v))| + |E_{G-w_0}(N(v), V(G)\backslash N(v))| \nonumber\\
        & \leq 2|E_{G-w_0}(N(v))|+  |E_{G-w_0}(N(v), N_G(w_0)\backslash N(v))| +\nonumber \\
         & \quad\quad |E_{G-w_0}(N(v), V(G)\backslash (N_G(w_0)\cup N(v))| \label{eq:ST-edges}
    \end{align}
By Theorem \ref{thm:CRS_extremal} and Lemma \ref{lem:almostfulldegree},
$$2|E_{G-w_0}(N(v))|\leq (t+1)2C_1\sqrt{n}.$$
Since $G$ is $K_{2,t}$-minor-free, the bipartite graph induced by $E_{G-w_0}(N(v), N_G(w_0)\backslash N(v))$ is $K_{1,t}$-free. Hence every vertex in $N(v)$ has at most $t-1$ neighbors in $N_G(w_0)\backslash N(v)$. It follows that 
$$|E_{G-w_0}(N(v), N_G(w_0)\backslash N(v))| \leq (t-1)|N(v)| \leq 2(t-1)C_1\sqrt{n}.$$
Similarly, every vertex in $V(G)\backslash (N_G(w_0)\cup N(v))$ has at most $t-1$ neighbors in $N(v)$. It follows that 
$$|E_{G-w_0}(N(v), V(G)\backslash (N_G(w_0)\cup N(v))| \leq (t-1) |V(G)\backslash (N_G(w_0)\cup N(v))| \leq (t-1)C_1\sqrt{n}.$$
Hence by \eqref{eq:ST-edges},
$$ \dss_{s \in N(v) \backslash \{w_0\}} \dss_{t\in N(s)\backslash\{w_0\}} \cx_t \leq (t+1)2C_1\sqrt{n} + 2(t-1)C_1\sqrt{n}+(t-1)C_1\sqrt{n} = O(\sqrt{n}).$$
\end{proof}
Now by the claim above and \eqref{ieq:expansion}, we have that
$$\lambda_1^2 x_v \leq \lambda_1 + 2C_1\sqrt{n} + O(\sqrt{n}) = O(\sqrt{n}).$$
Using the fact that $|\lambda_1|\geq \sqrt{(n-1)} - \frac{t-1}{2}-O\left(\frac{1}{\sqrt{n}}\right)$, we have that 
$$x_v = O\left(\frac{1}{\sqrt{n}}\right).$$
It follows that $w_0 = u_0$.
\end{proof}

\begin{lemma}\label{lem:max-degree}
We have that $d(u_0) = n-1$.
\end{lemma}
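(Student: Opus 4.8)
The plan is to argue by contradiction: suppose $d(u_0)\le n-2$, and let $M=V(G)\setminus(N(u_0)\cup\{u_0\})$ be the set of non-neighbours of $u_0$, so $1\le |M|\le C_1\sqrt n$ by Lemma~\ref{lem:almostfulldegree}. I will fix an arbitrary $v\in M$ and build a $K_{2,t}$-minor-free graph $G^*$ with strictly larger spread, contradicting extremality. The modification is to delete every edge incident to $v$ and then add the single edge $u_0v$, so that in $G^*$ the vertex $v$ becomes a pendant of $u_0$. Since $G^*-v=G-v$ is a subgraph of $G$, and attaching a degree-one vertex cannot create a minor of minimum degree at least two (any $K_{2,t}$-minor model of $G^*$ would, after discarding $v$ from its branch set, already live in $G-v$), the graph $G^*$ is $K_{2,t}$-minor-free and hence a legal competitor.

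Before comparing spreads I would record sharper estimates on the eigenvector entries at non-neighbours. Because $G$ is $K_{2,t}$-minor-free it is $K_{2,t}$-subgraph-free, so any two vertices share at most $t-1$ common neighbours; in particular $|N(v)\cap N(u_0)|\le t-1$ for every $v\in M$. Combining this with Theorem~\ref{thm:CRS_extremal} applied to $G[M]$ (which has $O(|M|)=O(\sqrt n)$ edges) and the bound $|z_u|=O(1/\sqrt n)$ from Lemma~\ref{lem:almostfulldegree}, I can bound $\vol(M)$: summing $|\lambda_n||z_v|=|\sum_{s\in N(v)}z_s|$ over $v\in M$ and splitting each neighbourhood into its $N(u_0)$-part (at most $t-1$ vertices) and its $M$-part yields $|\lambda_n|\vol(M)=O(1)$, hence $\vol(M)=O(1/\sqrt n)$. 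Feeding this back into the eigen-equation at a single $v\in M$ gives $|\lambda_n||z_v|\le (t-1)O(1/\sqrt n)+\vol(M)=O(1/\sqrt n)$, so $|z_v|=O(1/n)$; the identical computation for $\mathbf{x}$ gives $\sum_{s\in N(v)}x_s=\lambda_1 x_v=O(1/\sqrt n)$, whence $\lambda_1 x_v<1$ for large $n$.

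For the spectral radius I would use $\mathbf{x}$ itself as a test vector for $G^*$. A direct computation gives $\mathbf{x}'A(G^*)\mathbf{x}-\lambda_1\|\mathbf{x}\|^2=2x_v(x_{u_0}-\lambda_1 x_v)=2x_v(1-\lambda_1 x_v)>0$, using $x_{u_0}=1$ and $\lambda_1 x_v<1$, so $\lambda_1(G^*)>\lambda_1(G)$. The crucial and more delicate step is to show $\lambda_n(G^*)<\lambda_n(G)$ for every non-neighbour $v$, irrespective of the sign of $z_v$. Here I would not reuse $\mathbf{z}$ verbatim but instead re-optimise its coordinate at the now-pendant vertex $v$: take the test vector $\mathbf{z}^*$ equal to $\mathbf{z}$ off $v$ and equal to a free parameter $c$ at $v$. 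Since $v$ is adjacent only to $u_0$ in $G^*$, one computes $\mathbf{z}^{*\prime}A(G^*)\mathbf{z}^*=\lambda_n\|\mathbf{z}\|^2-2\lambda_n z_v^2+2c$ and $\|\mathbf{z}^*\|^2=\|\mathbf{z}\|^2-z_v^2+c^2$, so the Rayleigh quotient falls strictly below $\lambda_n$ exactly when the downward parabola $\lambda_n c^2-2c+\lambda_n z_v^2$ is positive for some $c$, i.e. when its discriminant $4-4\lambda_n^2 z_v^2$ is positive, that is $|z_v|<1/|\lambda_n|$. This is guaranteed by the previous paragraph, since $|z_v|=O(1/n)$ while $1/|\lambda_n|=\Theta(1/\sqrt n)$ by Lemma~\ref{lem:lambdan}. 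Choosing such a $c$ (its vertex is at $c=1/\lambda_n<0$, so $z_v^*$ becomes slightly negative) yields $\lambda_n(G^*)<\lambda_n(G)$, and together with $\lambda_1(G^*)>\lambda_1(G)$ we obtain $S(G^*)>S(G)$, the desired contradiction.

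The main obstacle is precisely this $\lambda_n$ step: a non-neighbour $v$ may have $z_v>0$, in which case naively inserting the edge $u_0v$ (with $z_{u_0}=1>0$) would tend to \emph{raise} $\lambda_n$ and shrink the spread. Re-optimising the single coordinate $z_v^*$ sidesteps the sign issue entirely, but it succeeds only because the non-neighbour entries are as small as $O(1/n)$; this is exactly why establishing the refined bound $\vol(M)=O(1/\sqrt n)$, rather than the cruder $|z_v|=O(1/\sqrt n)$ of Lemma~\ref{lem:almostfulldegree}, is the technical heart of the argument.
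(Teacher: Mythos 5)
Your proof is correct, and it performs the same surgery as the paper's proof of Lemma~\ref{lem:max-degree} --- detach a non-neighbour $v$ of $u_0$, reattach it as a pendant at $u_0$, and show via Rayleigh quotients that $\lambda_1$ strictly increases while $\lambda_n$ strictly decreases --- but both technical steps are executed differently. The paper does not take $v$ arbitrary: it applies Theorem~\ref{thm:CRS_extremal} to the induced subgraph on the non-neighbours, together with $K_{2,t}$-freeness toward $N(u_0)$, to select a $v$ with $d_G(v)\le 2t$; with only a constant number of incident edges, the crude bounds $|\cz_y|,\cx_y=O(1/\sqrt n)$ for $y\neq w_0$ already make the deleted-edge terms $O(|\cz_v|/\sqrt n)$, dominated by the gain $2|\cz_v|$ (resp.\ $2\cx_v$) from the new edge, and for $\lambda_n$ it simply sets the $v$-coordinate to $-|\cz_v|$. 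You instead keep $v$ arbitrary and pay for it with a genuinely new estimate --- $\vol$ of the non-neighbour set is $O(1/\sqrt n)$, hence $|\cz_v|=O(1/n)$ and $\lambda_1\cx_v=O(1/\sqrt n)$; your double counting over the at most $t-1$ common neighbours with $u_0$ and the $O(\sqrt n)$ edges inside the non-neighbour set checks out --- and with a one-parameter re-optimisation of the $v$-coordinate whose discriminant condition $|\cz_v|<1/|\lambda_n|$ that estimate comfortably supplies. Your variant buys uniformity over all non-neighbours and handles the degenerate case $\cz_v=0$ with a strict inequality (where the paper's sign-flip, as written, yields only a non-strict one), at the cost of an extra volume lemma that the paper's low-degree selection renders unnecessary. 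Both arguments are sound.
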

\begin{proof}
Suppose for contradiction that $d(u_0)<n-1$. Let $S=V(G)\backslash (N(u_0)\cup \{u_0\})$. Then $S\neq \emptyset$. By Lemma \ref{lem:almostfulldegree}, $|S|\leq C_1 \sqrt{n}$. Note that $G[S]$ is also $K_{2,t}$-minor-free. Hence by Theorem \ref{thm:CRS_extremal}, $|E(G[S])|\leq \frac{1}{2}(t+1)|S|$. It follows that there exists a vertex $v \in S$ such that $d_S(v) \leq t+1$. Moreover, since $G$ is $K_{2,t}$-minor-free, we have that $d_{N(u_0)}(v) \leq t-1$. Hence $d_G(v) \leq t+1 +(t-1) = 2t$.
Let $G'$ be obtained from $G$ by removing all the edges of $G$ incident with $v$ and adding the edge $v u_0$.

We claim that $\lambda_n(G') < \lambda_n(G)$. Indeed, consider the vector $\tilde{\cz}$ such that $\tilde{\cz}_{u} = \cz_u$ for $u\neq v$ and $\tilde{\cz}_v = -|\cz_v|$. Then for sufficiently large $n$, we have 
    \begin{align*}
        \tilde{\cz}' A(G') \tilde{\cz} &\leq \cz' A(G) \cz + 2 \dss_{y\sim v} |\cz_y \cz_v| - 2|\cz_v| z_{u_0}\\
                      & \leq \cz' A(G)\cz + 2 \cdot 2t \cdot O\left(\frac{1}{\sqrt{n}}\right)\cdot |\cz_v| -  2 |\cz_v|\\
                      & < \cz' A(G) \cz.
    \end{align*}   
By the Rayleigh quotient, we have 
$$\lambda_n(G') \leq \frac{\tilde{\cz}' A(G') \tilde{\cz}}{\tilde{\cz}' \tilde{\cz}} < \frac{ \cz' A(G) \cz}{ \cz' \cz} = \lambda_n(G).$$
Similarly, we claim that $\lambda_1(G') > \lambda_1(G)$. Indeed,
     \begin{align*}
     \cx' A(G') \cx   &= \cx' A(G) \cx - 2 \dss_{y\sim v} \cx_y \cx_v + 2\cx_v \cx_{u_0}\\
                      & \geq \cx' \lambda_1(G) \cx - 2 \cdot 2t \cdot O\left(\frac{1}{\sqrt{n}}\right)\cdot \cx_v +  2\cx_v\\
                      & > \cx' A(G)\cx.
    \end{align*}
Using the Rayleigh quotient again, 
$$\lambda_1(G') \geq \frac{ \cx' A(G') \cx }{ \cx' \cx } > \frac{ \cx' A(G) \cx }{ \cx'\cx } = \lambda_1(G).$$
Therefore, we have $S(G') =\lambda_1(G') -\lambda_n(G') > \lambda_1(G) -\lambda_n(G) = S(G)$, giving a contradiction.
\end{proof}

\section{Proof of Theorem \ref{thm:main}}\label{sec:main_thm}

By Lemma \ref{lem:max-degree}, a maximum-spread $K_{2,t}$-minor-free graph $G$ has a vertex $u_0$ with degree $n-1$. 
Let $\alpha$ be a normalized eigenvector corresponding to an eigenvalue $\lambda$ of the adjacency matrix of $G$ so that $\alpha_{u_0}=1$. Let $H=G - u_0$ and $A_H$ be the adjacency matrix of $H$. Note that $H$ is $K_{1,t}$-minor-free since $G$ is $K_{2,t}$-minor-free.
Let $I$ denote the identity matrix of dimension $n-1$ and let $\one$ denote the all one vector of dimension $n-1$.
Moreover, let $\bf x$ denote the restriction of $\alpha$ to the vertices of $H$.
The following lemma computes the vector ${\bf x}$.

\begin{lemma}\label{l1}
  We have
    \begin{equation} \label{eq:taylor_eq} 
        {\bf x} =\sum_{k=0}^\infty \lambda^{-(k+1)} A_H^k \one.
    \end{equation}
\end{lemma}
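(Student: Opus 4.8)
The plan is to read off the eigen-equation for $\alpha = (\alpha_{u_0}, {\bf x})$ in block form and then invert $\lambda I - A_H$ via a Neumann series. Since $u_0$ is adjacent to every other vertex, ordering $u_0$ first gives
\begin{equation*}
A(G) = \begin{pmatrix} 0 & \one' \\ \one & A_H \end{pmatrix},
\end{equation*}
and expanding $A(G)\alpha = \lambda\alpha$ with $\alpha_{u_0} = 1$, the rows indexed by $V(H)$ read $\one + A_H {\bf x} = \lambda {\bf x}$, that is,
\begin{equation*}
(\lambda I - A_H){\bf x} = \one .
\end{equation*}
(The single row indexed by $u_0$ yields the side relation $\one'{\bf x} = \lambda$, which is not needed here.) So everything reduces to inverting $\lambda I - A_H$ applied to $\one$.

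The crux is to show that $\lambda I - A_H$ is invertible with a convergent Neumann expansion, for which it suffices to prove $|\lambda| > \rho(A_H)$, where $\rho$ denotes the spectral radius. Here I would use that $H$ is $K_{1,t}$-minor-free: in particular $H$ contains no $K_{1,t}$ as a subgraph, so every vertex of $H$ has degree at most $t-1$, whence $\rho(A_H) \le \Delta(H) \le t-1$, a bound constant in $n$. The eigenvalues relevant to the spread are $\lambda \in \{\lambda_1(G), \lambda_n(G)\}$, for which the normalization $\alpha_{u_0}=1$ is legitimate (the $\lambda_1$-eigenvector is positive by Perron--Frobenius, and for $\lambda_n$ we have already arranged $\cz_{w_0}=1$ with $w_0 = u_0$). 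For these, Lemma~\ref{lem:lambdan} gives $|\lambda| \ge \sqrt{n-1} - \tfrac{t-1}{2} - O(1/\sqrt n)$, which for $n$ sufficiently large exceeds $t-1 \ge \rho(A_H)$; hence $|\lambda| > \rho(A_H)$.

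With $\rho(\lambda^{-1}A_H) = \rho(A_H)/|\lambda| < 1$, the matrix $I - \lambda^{-1}A_H$ is invertible and $(I-\lambda^{-1}A_H)^{-1} = \sum_{k\ge 0}\lambda^{-k}A_H^k$ converges. Therefore
\begin{equation*}
{\bf x} = (\lambda I - A_H)^{-1}\one = \lambda^{-1}(I-\lambda^{-1}A_H)^{-1}\one = \sum_{k=0}^{\infty}\lambda^{-(k+1)}A_H^k\one ,
\end{equation*}
which is exactly \eqref{eq:taylor_eq}. I expect the main obstacle to be the convergence/invertibility step, and the point that resolves it cleanly is the uniform bound $\rho(A_H)\le t-1$ coming from $K_{1,t}$-minor-freeness (as opposed to the much weaker $O(\sqrt n)$ degree bound of Lemma~\ref{lem:almostfulldegree}, which would not separate $\rho(A_H)$ from $|\lambda|\approx\sqrt{n}$); the remaining algebra is routine.
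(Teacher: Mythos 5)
Your proof is correct and follows essentially the same route as the paper's: extract the rows of the eigen-equation indexed by $V(H)$ to get $(\lambda I - A_H)\bx = \one$, bound $\rho(A_H)\le \Delta(H)\le t-1$ via $K_{1,t}$-minor-freeness, note $|\lambda|>t-1$ for $\lambda\in\{\lambda_1,\lambda_n\}$ when $n$ is large, and expand the inverse as a Neumann series. The only difference is presentational: you make explicit the convergence justification and the legitimacy of the normalization $\alpha_{u_0}=1$, which the paper states more tersely.
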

\begin{proof}
Since $H$ is $K_{1,t}$-minor-free, the maximum degree of $H$ is at most $t-1$. For sufficiently large $n$,
both $\lambda_1(G)$ and $|\lambda_n(G)|$ are greater than $t-1$. 
Each vertex $v\not=u_0$ is adjacent to $u_0$ and $\alpha_{u_0}=1$. Hence when restricting the coordinates of $A(G)\alpha$ to $V(G)\backslash\{u_0\}$, we have that
\begin{equation}\label{eq:bx}
    A_H\bx + \one = \lambda \bx.
\end{equation}
It then follows that
\begin{align}
    \bx &= (\lambda I-A_H)^{-1}\one  \nonumber\\
    &=\lambda^{-1}  (I-\lambda^{-1}A_H)^{-1}\one  \nonumber\\
    &= \lambda^{-1} \sum_{k=0}^\infty (\lambda^{-1}A_H)^{k} \one\nonumber\\
    &= \sum_{k=0}^\infty \lambda^{-(k+1)} A_H^k \one. 
    \end{align}
Here we use the assumption that $|\lambda|> t-1 \geq \lambda_1(A_H)$ so that the infinite series converges.
\end{proof}

\begin{lemma}\label{l8}
Both $\lambda_1$ and $\lambda_n$ satisfy the following equation.
\begin{equation} \label{eq:lambda}
        \lambda^2 =  (n-1) +
  \sum_{k=1}^\infty \lambda^{-k} \one' A_H^k \one.  
\end{equation}
\end{lemma}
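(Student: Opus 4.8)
The plan is to extract the one remaining eigen-equation that Lemma~\ref{l1} did not yet use, namely the coordinate of $A(G)\alpha = \lambda\alpha$ at the special vertex $u_0$. By Lemma~\ref{lem:max-degree} we have $d(u_0)=n-1$, so $u_0$ is adjacent to every vertex of $H$. Hence the $u_0$-row of $A(G)$ has a $1$ in each coordinate indexed by $V(H)$ and a $0$ in its own coordinate, and reading off the $u_0$-entry of $A(G)\alpha=\lambda\alpha$ gives
\[
\sum_{v\in V(H)} \alpha_v \;=\; \lambda\,\alpha_{u_0} \;=\; \lambda,
\]
since $\alpha_{u_0}=1$ by normalization. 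In terms of the restricted vector $\bx$, this is exactly $\one'\bx=\lambda$.

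Next I would substitute the closed form for $\bx$ supplied by Lemma~\ref{l1}. Applying $\one'$ termwise to the convergent series $\bx=\sum_{k=0}^\infty \lambda^{-(k+1)} A_H^k \one$ yields
\[
\lambda \;=\; \one'\bx \;=\; \sum_{k=0}^\infty \lambda^{-(k+1)}\,\one' A_H^k \one.
\]
Multiplying both sides by $\lambda$ gives $\lambda^2=\sum_{k=0}^\infty \lambda^{-k}\,\one' A_H^k \one$. Finally I would peel off the $k=0$ term: since $A_H^0=I$ is the $(n-1)$-dimensional identity, that term equals $\one'\one=n-1$, and separating it out produces precisely
\[
\lambda^2 \;=\; (n-1)+\sum_{k=1}^\infty \lambda^{-k}\,\one' A_H^k \one,
\]
the claimed identity. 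Because both $\lambda=\lambda_1$ and $\lambda=\lambda_n$ are eigenvalues of $A(G)$ with eigenvectors normalizable so that the $u_0$-coordinate is $1$ (for $\lambda_1$ by Perron--Frobenius, and for $\lambda_n$ since $u_0=w_0$), the derivation applies verbatim to each, establishing the equation for both.

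This argument is essentially a bookkeeping step, so I do not expect a genuine obstacle. The only point requiring care is the interchange of $\one'$ with the infinite sum, i.e.\ the absolute convergence of the series; but this is already guaranteed by the hypothesis $|\lambda|>t-1\ge \lambda_1(A_H)$ used in Lemma~\ref{l1}, which makes $(\lambda I-A_H)^{-1}=\lambda^{-1}\sum_k(\lambda^{-1}A_H)^k$ a norm-convergent Neumann series, so termwise application of the bounded linear functional $\one'$ is justified. Thus the proof reduces to writing down the $u_0$-eigen-equation and invoking Lemma~\ref{l1}.
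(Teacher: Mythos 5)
Your proposal is correct and follows essentially the same route as the paper: read off the $u_0$-coordinate of the eigen-equation to get $\lambda=\sum_{v\in V(H)}\cx_v=\one'\bx$, substitute the series from Lemma~\ref{l1}, separate the $k=0$ term $\one'\one=n-1$, and multiply through by $\lambda$. Your added remark on the norm-convergence of the Neumann series justifying the termwise application of $\one'$ is a fair (implicit in the paper) point of care, but otherwise the arguments coincide.
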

\begin{proof}
The eigen-equation at $u_0$ gives
\begin{equation} \label{eigen_u}
    \lambda  =\lambda \cx_{u_0}=  \sum_{v\in V(H)} \cx_v.
\end{equation}
Applying Lemma \ref{l1}, we get
\begin{align*}
 \sum_{v\in V(H)} \cx_v &= \one' \cdot \bx\\
 &=\one' \cdot \sum_{k=0}^\infty \lambda^{-(k+1)} A_H^k \one \\
 &=   \sum_{k=0}^\infty \lambda^{-(k+1)} \one' A_H^k \one. 
\end{align*}
Plugging it to Equation \eqref{eigen_u}, we have 
\begin{equation} \label{eigen_u2}
    \lambda =    (n-1)\frac{1}{\lambda} +
  \sum_{k=1}^\infty \lambda^{-(k+1)} \one' A_H^k \one.  
\end{equation}
Multiplying by $\lambda$ on both sides, we get Equation \eqref{eq:lambda}.
\end{proof}

For $k=1,2,3\ldots$, let $a_k=  \one' A_H^k \one$. In particular, $a_1= \one' A_H \one=\sum_{v\in V(H)} d_H(v)=2|E(H)|$;
$a_2= \one' A_H^2 \one=\sum_{v\in V(H)} d_H(v)^2$. 

\begin{lemma} \label{lem:approximation}
We have the following estimation of the spread of $G$:
\begin{equation}\label{eq:spreadapprox}
    S(G)=2\sqrt{n-1}+\frac{2c_2}{\sqrt{n-1}} + \frac{2c_4}{(n-1)^{3/2}} +  \frac{2c_6}{(n-1)^{5/2}} + O\left(n^{-7/2}\right).
\end{equation}
Here 
\begin{align}
c_2 &= -\frac38 \left(\frac{a_1}{n-1}\right)^2 + \frac12 \frac{a_2}{n-1}, \label{eq:c2}
\\
c_4 &=-\frac{105}{128} \left(\frac{a_1}{n-1}\right)^4 +\frac{35}{16} \left(\frac{a_1}{n-1}\right)^2\frac{a_2}{n-1}
-\frac{5}{8}\left(\frac{a_2}{n-1}\right)^2 -\frac{5}{4}\frac{a_1}{n-1}\frac{a_3}{n-1} +\frac{1}{2} \frac{a_4}{n-1} \label{eq:c4}\\
c_6&=-\frac{3003}{1024} \left(\frac{a_1}{n-1}\right)^6 +\frac{3003}{256} \left(\frac{a_1}{n-1}\right)^4\frac{a_2}{n-1}
-\frac{693}{64} \left(\frac{a_1}{n-1}\right)^2\left(\frac{a_2}{n-1}\right)^2
+\frac{21}{16}\left(\frac{a_2}{n-1}\right)^3  
\nonumber\\
&\hspace*{5mm} 
-\frac{21}{32}\left(11\left(\frac{a_1}{n-1}\right)^3
-12\left(\frac{a_1}{n-1}\right)\left(\frac{a_2}{n-1}\right)
\right )\left(\frac{a_3}{n-1}\right)
- \frac{7}{8}
\left(\frac{a_3}{n-1}\right)^2 \nonumber\\
&\hspace*{5mm} 
+
\frac{7}{16}\left(9\left(\frac{a_1}{n-1}\right)^2 -4\frac{a_2}{n-1}\right)\frac{a_4}{n-1}
- \frac{7}{4}\frac{a_1}{n-1}\frac{a_5}{n-1} +\frac{1}{2} \frac{a_6}{n-1}. \label{eq:c6}
\end{align}
\end{lemma}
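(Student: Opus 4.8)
The plan is to read \eqref{eq:lambda} as an implicit equation for $\lambda$, extract an asymptotic expansion in powers of $(n-1)^{-1/2}$ for the two roots $\lambda_1\approx\sqrt{n-1}$ and $\lambda_n\approx-\sqrt{n-1}$, and exploit a parity symmetry between them so that only the terms claimed in \eqref{eq:spreadapprox} survive in $S(G)=\lambda_1-\lambda_n$. Write $N=n-1$ and $\eta=N^{-1/2}$. Since $H$ is $K_{1,t}$-minor-free it has maximum degree at most $t-1$, so $a_k=\one'A_H^k\one$ counts walks of length $k$ in $H$ and satisfies $0\le a_k\le N(t-1)^k$; hence $b_k:=a_k/N\in[0,(t-1)^k]$ is bounded for each fixed $k$, and $\sum_k a_k\lambda^{-k}$ converges geometrically whenever $|\lambda|>t-1$, which by Lemma~\ref{lem:lambdan} holds for both $\lambda\in\{\lambda_1,\lambda_n\}$ once $n$ is large.

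First I would substitute $\lambda=\epsilon\sqrt N\,y$ with $\epsilon=+1$ for $\lambda_1$ and $\epsilon=-1$ for $\lambda_n$. Using $\epsilon^{-k}=\epsilon^k$, equation \eqref{eq:lambda} becomes
\begin{equation}\label{eq:plan-y}
y^2=1+\sum_{k=1}^\infty b_k\,\zeta^k\,y^{-k},\qquad \zeta:=\epsilon\eta,
\end{equation}
in which the sign $\epsilon$ and the small parameter $\eta$ enter only through the product $\zeta$. A contraction-mapping argument shows that for $|\zeta|<\rho(t)$ equation \eqref{eq:plan-y} has a unique solution $y=Y(\zeta)$ analytic near $0$ with $Y(0)=1$, and that $Y(\zeta)=1+\sum_{j\ge1}P_j\zeta^j$ where each $P_j$ is a universal polynomial in $b_1,\dots,b_j$ not depending on $\epsilon$. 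Since $\eta\to0$, both $\lambda_1=\sqrt N\,Y(\eta)$ and $\lambda_n=-\sqrt N\,Y(-\eta)$ are the unique roots of \eqref{eq:lambda} near $\pm\sqrt N$ guaranteed by Lemma~\ref{lem:lambdan}, whence
\begin{equation}\label{eq:plan-S}
S(G)=\lambda_1-\lambda_n=\sqrt N\,[Y(\eta)+Y(-\eta)]=2\sqrt N\sum_{j\text{ even}}P_j\,\eta^j.
\end{equation}
Thus every odd-order term cancels, and \eqref{eq:spreadapprox} will follow once I verify $P_2=c_2$, $P_4=c_4$, and $P_6=c_6$.

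To pin down the coefficients I would insert $Y(\zeta)=1+\sum_jP_j\zeta^j$ into \eqref{eq:plan-y}, expand each $y^{-k}$ by the binomial series, and equate coefficients of $\zeta^1,\dots,\zeta^6$. This gives a triangular system solved successively by $P_1=b_1/2$, then $P_2,\dots,P_6$; as a sanity check the order-$\zeta^2$ relation already yields $P_2=-\tfrac38 b_1^2+\tfrac12 b_2=c_2$. The tail $\sum_{k\ge7}$ in \eqref{eq:plan-y} starts at order $\zeta^7$ and so has no effect on $P_1,\dots,P_6$; these therefore depend only on $a_1,\dots,a_6$ and are computed from the truncation at $k\le6$, reproducing \eqref{eq:c2}--\eqref{eq:c6}.

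Finally, because $b_k\le(t-1)^k$ and $|\zeta|=N^{-1/2}$, Cauchy estimates on the disc $|\zeta|<\rho(t)$ give $|P_j|\le C(t)^j$, so the omitted even-order part of \eqref{eq:plan-S} is $2\sqrt N\sum_{j\ge8,\, j\text{ even}}P_j\eta^j=O(\sqrt N\,\eta^8)=O(N^{-7/2})$, matching the stated error. Here the parity cancellation is essential: were the odd terms present, the first neglected term would be of order $\sqrt N\,\eta^7=N^{-3}$, which exceeds the claimed $O(n^{-7/2})$. I expect the only real difficulty to be computational, namely carrying out the degree-six coefficient extraction for $P_4$ and especially $P_6$ without error, since the latter produces the lengthy expression \eqref{eq:c6}; the conceptual steps are routine.
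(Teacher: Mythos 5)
Your proposal is correct and follows essentially the same route as the paper: both treat Equation \eqref{eq:lambda} as an implicit equation for $\lambda$, expand the two roots near $\pm\sqrt{n-1}$ in powers of $(n-1)^{-1/2}$, observe that the odd-order coefficients cancel in $\lambda_1-\lambda_n$, and leave the extraction of $c_2,c_4,c_6$ to a symbolic computation. Your substitution $\lambda=\epsilon\sqrt{N}\,y$ with the single analytic function $Y(\zeta)$ is a cleaner justification of the parity cancellation than the paper's appeal to the appendix of \cite{LLLW2022}, but it is the same underlying argument.
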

\begin{proof}
    Recall that by \eqref{eq:lambda}, we have that for $\lambda \in \{\lambda_1, \lambda_n\}$,
    $$ \lambda =    (n-1)\frac{1}{\lambda} +
  \sum_{k=1}^\infty \lambda^{-(k+1)} \one' A_H^k \one.  $$
Multiplying by $\lambda$ on both sides, we have that
\begin{equation}
    \lambda^2 = (n-1) +  \dss_{k=1}^{\infty} \frac{a_k}{\lambda^k}.
\end{equation}
  By similar logic in the main lemma of the appendix in \cite{LLLW2022}, $\lambda$ has the following series expansion:
 $$\lambda_1 = \sqrt{(n-1)} + c_1 + \frac{c_2}{\sqrt{n-1}} + \frac{c_3}{n-1} + \frac{c_4}{(n-1)^{\frac{3}{2}}} + \frac{c_5}{(n-1)^2} + \frac{c_6}{(n-1)^\frac{5}{2}} + O\left(n^{-7/2}\right).$$
Similarly, 
  $$\lambda_n = -\sqrt{(n-1)} + c_1 - \frac{c_2}{\sqrt{n-1}} + \frac{c_3}{n-1} - \frac{c_4}{(n-1)^{\frac{3}{2}}} + \frac{c_5}{(n-1)^2} - \frac{c_6}{(n-1)^\frac{5}{2}} + O\left(n^{-7/2}\right).$$

Using SageMath (computation available at \href{https://github.com/wzy3210/graph_spreads}{https://github.com/wzy3210/graph\_spreads}), we get that $c_2, c_4, c_6$ are the values in Equations \eqref{eq:c2}, \eqref{eq:c4}, \eqref{eq:c6} respectively.
It follows that
$$S(G) =\lambda_1 - \lambda_n =  2\sqrt{(n-1)} + \frac{2c_2}{\sqrt{n-1}} + \frac{2c_4}{(n-1)^{\frac{3}{2}}} + \frac{2c_6}{(n-1)^\frac{5}{2}} + O\left(n^{-7/2}\right).$$
\end{proof}

\begin{lemma}\label{l:k2t}
For sufficiently large $n$, a maximum-spread $K_{2,t}$-minor-free $n$-vertex graph $G$ must be of the form
$$K_1\vee \left(\ell K_t\cap (n-1-\ell t)P_1\right).$$
\end{lemma}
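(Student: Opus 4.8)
The plan is to feed the structural question into the spread expansion of Lemma~\ref{lem:approximation} and then solve the resulting discrete optimization over the admissible graphs. Write $m=n-1$ and recall $G=K_1\vee H$ with $H=G-u_0$. Since $G$ is $K_{2,t}$-minor-free, $H$ is not only $K_{1,t}$-minor-free (so $\Delta(H)\le t-1$) but also $K_{2,t-1}$-minor-free: a $K_{2,t-1}$ minor in $H$ would combine with the apex $u_0$, used as one of the $t$ vertices on the large side, to produce a $K_{2,t}$ minor in $G$. Both facts will be needed. The degree bound gives $a_k=\one'A_H^k\one\le (t-1)^k m$, so each $a_k/m=O(1)$ and $c_2,c_4,c_6=O(1)$; hence $S(G)=2\sqrt{m}+\tfrac{2c_2}{\sqrt m}+O(m^{-3/2})$, and for large $n$ a maximizer must first maximize $c_2=-\tfrac38(a_1/m)^2+\tfrac12(a_2/m)$, breaking ties through $c_4$ and then $c_6$.

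First I would bound $c_2$ purely in terms of $a_1$. From $d_H(v)\le t-1$ we get $a_2=\sum_v d_H(v)^2\le (t-1)\sum_v d_H(v)=(t-1)a_1$, so $c_2\le -\tfrac38(a_1/m)^2+\tfrac{t-1}{2}(a_1/m)\le \tfrac{(t-1)^2}{6}$, where the outer maximum occurs at $a_1/m=\tfrac{2(t-1)}{3}$ and the inner equality $a_2=(t-1)a_1$ forces every non-isolated vertex of $H$ to have degree exactly $t-1$. The candidate graph $K_1\vee(\ell K_t\cup(m-\ell t)P_1)$ with $\ell\approx \tfrac{2m}{3t}$ comes within $O(1/m)$ of this bound, so the extremal $H$ must nearly saturate both constraints: its non-isolated part is $(t-1)$-regular and carries $\tfrac{2m}{3}(1+o(1))$ vertices. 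This is exactly the proportion $p=2/3$ and the value $\ell\approx 2n/3t$ that appear in the theorem.

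The structural heart is then to classify the $(t-1)$-regular part: I claim each of its connected components is a copy of $K_t$. This is where $K_{2,t-1}$-minor-freeness is indispensable, since $(t-1)$-regularity together with $\Delta(H)\le t-1$ does not by itself force $K_t$ (for $t=3$ every cycle is $2$-regular and $K_{1,3}$-minor-free), and it is the extra forbidden minor that rules out the larger regular graphs. Concretely, I would show that a connected $(t-1)$-regular graph on more than $t$ vertices always contains either a $K_{1,t}$ minor or a $K_{2,t-1}$ minor: contracting a suitable edge $uv$ merges its two endpoints into a branch set of degree $2(t-1)-2-|N(u)\cap N(v)|$, which exceeds $t-1$ (yielding a $K_{1,t}$ minor) unless every edge has many common-neighbour triangles through it, and in that locally dense but incomplete regime one instead extracts a $K_{2,t-1}$ minor. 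Since $K_t$ has only $t$ vertices and hence contains neither forbidden minor, the components are exactly $K_t$ and the remaining $m-\ell t$ vertices are isolated, giving $H=\ell K_t\cup(m-\ell t)P_1$.

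The main obstacle is this last classification, for two reasons. First, the minor-extraction argument must be made uniform in $t$, with the degenerate cases $t=2$ (a matching) and $t=3$ (paths and triangles, cycles excluded by $K_{2,2}$-minor-freeness) treated separately. Second, the optimization above is only asymptotic, whereas the lemma asserts an exact structure; near its maximum the function $-\tfrac38(a_1/m)^2+\tfrac{t-1}{2}(a_1/m)$ is flat, so nudging $a_1$ toward its optimum by introducing a few vertices of intermediate degree perturbs $c_2$ only to second order while strictly lowering $a_2$ to first order, and one must verify that this trade-off always favours the clean $K_t$-tiling. Finally, any two $(t-1)$-regular supports with the same number of non-isolated vertices yield identical $a_k$ and hence identical spread, so a genuine spectral tie can only be broken combinatorially, by the fact that the minor constraints leave $K_t$ as the sole admissible regular component; making this exclusion rigorous, together with the near-flat optimization, is the delicate part of the argument.
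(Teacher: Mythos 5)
Your overall route is the same as the paper's: obtain the dominating vertex $u_0$, expand $S(G)$ via Lemma~\ref{lem:approximation}, show the maximizer must have $a_1/(n-1)$ near $\tfrac{2}{3}(t-1)$ and $a_2=(t-1)a_1$ exactly (so the non-isolated part of $H$ is $(t-1)$-regular), and then classify the components. Your repackaging of the constraints as ``$H$ is both $K_{1,t}$-minor-free and $K_{2,t-1}$-minor-free'' is correct and is equivalent to the way the paper inserts $u_0$ into its minor constructions. One of the two steps you flag as obstacles is essentially resolvable as you describe: to get $a_2=(t-1)a_1$ exactly rather than asymptotically, the paper notes that the deficit $B=(t-1)a_1-a_2$ is a non-negative \emph{integer} contributing $-B/(n-1)^{3/2}$ to the spread, which for $B\ge 1$ dominates all corrections from $c_4$ onward once the a priori bounds on the $a_i$ (Claims~\ref{cl:Asmall'} and~\ref{cl:Asmall2}) are in place; you identify the right mechanism but leave the verification open.

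The genuine gap is the component classification. Your edge-contraction computation correctly shows that if some edge $uv$ of a connected $(t-1)$-regular component satisfies $|N(u)\cap N(v)|\le t-4$, then contracting it produces a vertex of degree at least $t$ and hence a $K_{1,t}$ minor. But the complementary ``locally dense but incomplete regime,'' where you assert one ``instead extracts a $K_{2,t-1}$ minor,'' is exactly where all the work lies, and you give no argument for it. The paper's proof of this step is its longest claim: it shows $|N(u)\cap N(v)|\le t-2$ for every pair (no $K_{2,t-1}$ subgraph), $|N(u)\cap N(v)|\ge t-2$ for non-adjacent pairs and $\ge t-3$ for adjacent pairs (via path contractions through a common neighbour), then supposes a non-edge $uv$ exists, produces $u'\in N(u)\setminus N(v)$ and $v'\in N(v)\setminus N(u)$, proves $u'v'\notin E$, deduces $N(u')\cap N(v)=N(v')\cap N(u)=N(u)\cap N(v)$, shows each vertex of $N(u)\cap N(v)$ has exactly one non-neighbour inside that set, and finally derives $|N(w)\cap N(w')|\ge t$ for such a non-adjacent pair, a contradiction with $(t-1)$-regularity. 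Without this chain (or some other complete proof that a connected $(t-1)$-regular graph on more than $t$ vertices contains a $K_{1,t}$ or $K_{2,t-1}$ minor, uniformly in $t$ and including the degenerate cases $t=2,3$ you mention), the lemma is not established: everything earlier in your argument only forces the non-isolated part of $H$ to be $(t-1)$-regular, which, as you yourself observe, does not by itself yield $K_t$ components.
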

\begin{proof}[Proof of Lemma~\ref{l:k2t}]
  By Lemma~\ref{lem:max-degree}, there exists a vertex $u_0 \in V(G)$ of degree $n-1$. Let $H = G-u_0$. 
Since $G$ is $K_{2,t}$-minor-free, every vertex in $H$ has at most $t-1$ neighbors in $H$. Thus $\Delta(H) \leq t-1$, and it follows that
$$a_2= \one' A_H^2 \one=\sum_{v\in V(H)} d_H(v)^2 \leq (t-1) \sum_{v\in V(H)} d_H(v) = (t-1) a_1.$$
Note that $$a_1 = 2|E(H)| \leq \Delta(H)|V(H)| \leq (t-1)(n-1).$$
It follows that $a_i\leq (t-1)^i(n-1)$ for all $i\geq 2$. 
By Lemma \ref{lem:approximation}, we have the following estimation of the spread of $G$:
\begin{equation}
    S(G)=2\sqrt{n-1}+\frac{2c_2}{\sqrt{n-1}} + \frac{2c_4}{(n-1)^{3/2}} +  \frac{2c_6}{(n-1)^{5/2}} + O\left(n^{-7/2}\right),
\end{equation}
where $c_2, c_4, c_6$ are computed in Lemma \ref{lem:approximation}, and all $c_i$s are bounded by constants depending on $t$.
Note
\begin{align*}
    c_2 &= -\frac38 \left(\frac{a_1}{n-1}\right)^2 + \frac12 \frac{a_2}{n-1}\\
        &\leq -\frac38 \left(\frac{a_1}{n-1}\right)^2 + \frac12 \frac{(t-1)a_1}{n-1}\\
        &= \frac{(t-1)^2}{6} -\frac{3}{8} \lp \frac{a_1}{n-1} -\frac{2}{3}(t-1)\rp^2 \\
        &\leq \frac{(t-1)^2}{6},
\end{align*}
where in the last inequality, the equality is only achieved when $a_1 = \frac{2}{3}(t-1)(n-1)$. For $G_0=K_1\vee  \left( \left\lfloor \frac{2n+\xi_t}{3t} \right\rfloor K_t \cup \left(n-1- t\left\lfloor \frac{2n+\xi_t}{3t} \right\rfloor\right)  P_1\right)$, we have $\frac{a_1}{n-1}= \frac{2}{3}(t-1)+O\lp \frac{1}{n}\rp$.
Thus
$$S(G_0) = 2\sqrt{n-1} + \frac{(t-1)^2}{3\sqrt{n-1}} + O\lp\frac{1}{n^{3/2}}\rp.$$
\begin{claim}\label{cl:Asmall}
There exists a constant $C>0$ such that the value of $a_1$ that maximizes $S(G)$ lies in the interval $(\frac{2}{3}(t-1)(n-1)- Cn^{1/2}, \frac{2}{3}(t-1)(n-1)+ Cn^{1/2})$.
\end{claim}
\begin{proof}
Let $C$ be a sufficiently large constant chosen later. Suppose for contradiction that $a_1$ is not contained in the interval above.
Then, we must have that
$$c_2 \leq \frac{(t-1)^2}{6} - \frac{3C^2n} {8(n-1)^2}.$$
This implies that
$$S(G) \leq 2\sqrt{n-1} + 2\cdot \frac{\frac{(t-1)^2}{6} - \frac{3C^2n} {8(n-1)^2}}{\sqrt{n-1}} + O\lp \frac{1}{(n-1)^{3/2}}\rp < S(G_0),$$
when $C$ is chosen to be large enough such that
$$-\frac{2\cdot 3C^2}{8(n-1)^2}\frac{n}{\sqrt{n-1}} + O\lp \frac{1}{(n-1)^{3/2}}\rp < 0.$$
This gives us a contradiction since $G$ is assumed to be an extremal graph that maximizes the spread over all $K_{2,t}$-minor-free graphs.
\end{proof}

From now on, we assume that $a_1 \in (\frac{2}{3}(t-1)(n-1)- Cn^{1/2}, \frac{2}{3}(t-1)(n-1)+ Cn^{1/2})$ for some constant $C>0$.

\begin{claim}\label{cl:Asmall'}
There is a constant $C_2$ such that the value of $a_2$ lies in the interval $[(t-1)a_1-C_2, (t-1)a_1]$.
\end{claim}
\begin{proof}
Let $C_2$ be a sufficiently large constant chosen later. Suppose for contradiction that $a_2<(t-1)a_1-C_2$. We then have that 
$$
S(G)\leq  2\sqrt{n-1} + \frac{(t-1)^2}{3\sqrt{n-1}} 
-\frac{C_2}{(n-1)^{3/2}}
+ O\lp\frac{1}{n^{3/2}}\rp< S(G_0),
$$
if we choose $C_2$ large enough, giving a contradiction.
\end{proof}

\begin{claim}\label{cl:Asmall2}
For  $i\geq 2$, we have $a_i\in [(t-1)^{i-1}(a_1-(i-1)C_2),a_1(t-1)^{i-1}] $.
\end{claim}
\begin{proof}
We will show this claim by inducting on $i\geq 2$. 
Note that by Claim \ref{cl:Asmall'}, we have that 
$a_2 \geq (t-1)a_1 -C_2$. Moreover, $a_2\leq (t-1)a_1$ since $\Delta(H)\leq t-1$. Hence the base case holds.
Moreover, we also obtain from above that $C_2 \geq (t-1)a_1 - a_2$.

Let $H'$ be the set of vertices in $H$ such that its degree is in the interval $[1,t-2]$. We have
$$C_2\geq (t-1)a_1-a_2=\sum_{v\in H'}(t-1-d(v))d(v)\geq (t-2)|H'|.$$
This implies 
$$|H'|\leq \frac{C_2}{t-2}.$$
For a vertex $v \in H$ and non-negative integer $k$, let $w_k(v)$ denote the number of walks of length $k$ in $H$ starting at $v$. Observe that
\begin{align*}
    (t-1)a_{i-1}-a_i &= (t-1)\dss_{v\in V(H)} w_{i-1}(v) -\dss_{v\in V(H)} w_i(v)\\
    &\leq  \dss_{v\in H'} \lp(t-1)-d_H(v)\rp (t-1)^{i-1}\\
    &\leq  |H'|(t-2)(t-1)^{i-1}  \\
    &\leq C_2(t-1)^{i-1}
\end{align*}
Thus,
\begin{align*}
    a_i&\geq (t-1)a_{i-1} -C_2(t-1)^{i-1}\\
     &\geq (t-1)((t-1)a_{i-2} -C_2(t-1)^{i-2}) -C_2(t-1)^{i-1} \hspace*{1cm}\mbox{ by induction}\\
     &=(t-1)^2a_{i-2}-2C_2(t-1)^{i-1}\\
     &\geq (t-1)^{i-1}a_{1} -(i-1)C_2(t-1)^{i-1},
\end{align*}
where the last inequality is obtained by repeatedly applying 
induction.
\end{proof}

\begin{claim}\label{cl:a1a2}
    $a_2 = (t-1) a_1$.
\end{claim}
\begin{proof}
    Assume that $a_1 = \frac{2}{3}(t-1)(n-1) + A$, and $a_2 = (t-1)a_1-B$, where $A \in [-Cn^{1/2}, Cn^{1/2}]$ and $0 \leq B \leq C_2$. For $i\geq 2$, let $c_i(G), c_i(G_0)$ denote the $c_i$ values of $G$ and $G_0$ respectively. Observe that
\begin{align*}
    c_2(G) &= -\frac38 \left(\frac{a_1}{n-1}\right)^2 + \frac12 \frac{a_2}{n-1}\\
          &= \frac{(t-1)^2}{6} - \frac{3A^2}{8(n-1)^2} - \frac{B}{2(n-1)}.
\end{align*}    
It follows that 
$$c_2(G)-c_2(G_0) = -\frac{3A^2}{8(n-1)^2} -\frac{B}{2(n-1)} +O(n^{-2}).$$
Moreover, by Claim \ref{cl:Asmall2}, for all $i\geq 4$, we have that
$$c_i(G) - c_i(G_0) = O(n^{-1/2}).$$
Thus 
\begin{align*}
    S(G) -S(G_0) &= 2 \cdot \frac{c_2(G)-c_2(G_0)}{\sqrt{n-1}}
+ 2 \cdot \frac{c_4(G)-c_4(G_0)}{(n-1)^{3/2}} +O((n-1)^{-5/2})\\
&\leq 2 \cdot \frac{ O\lp n^{-2}\rp
    - \frac{3A^2}{8(n-1)^2} - \frac{B}{2(n-1)}}{\sqrt{n-1}}
+ 2 \cdot\frac{O(n^{-1/2})}{(n-1)^{3/2}} + O((n-1)^{-5/2}).
\end{align*}
Since $S(G)\geq S(G_0)$, this implies that $A = O(n^{1/4})$, $B=0$ and thus $a_2 = (t-1)a_1$.
\end{proof}

\begin{claim}
    $H$ is the union of vertex disjoint $K_t$s and isolated vertices. 
\end{claim}
\begin{proof}
Recall that $a_1= \one' A_H \one=\sum_{v\in V(H)} d_H(v)=2|E(H)|$, and $a_2= \one' A_H^2 \one=\sum_{v\in V(H)} d_H(v)^2$. By Claim \ref{cl:a1a2}, we have that
$$\sum_{v\in V(H)} d_H(v)^2 = (t-1)\sum_{v\in V(H)} d_H(v).$$
Since $d_H(v) \leq t-1$ for every $v\in V(H)$, it follows that $H$ is the disjoint union of $(t-1)$-regular graphs and isolated vertices. Let $K$ be an arbitrary non-trivial component of $H$. We will show that $K$ is a clique on $t$ vertices.

We first claim that for any $u,v\in V(K)$, $N(u)\cap N(v) \neq \emptyset$. Otherwise, pick a shortest path $P$ between $u$ and $v$ in $P$. Observe that $|V(P) \cap N(u)| = |V(P)\cap N(v)| = 1$.
Contract $uPv$ into one vertex $x$ (call the new graph $G'$). Note that $x$ and $N(u)\cup N(v)$ form a $K_{1,t}$ in $G'$. Together with $u_0$ which is adjacent to every vetex in $K$, we have a $K_{2,t}$ minor in $G$, giving a contradiction.

Next, we claim that for any $u, v\in V(K)$ with $uv \notin E(K)$, $|N(u)\cap N(v)| \geq t-2$. Otherwise, $|N(u)\backslash N(v)| \geq 2$ and $|N(v)\backslash N(u)|\geq 2$. Similar to before, pick an arbitrary vertex $w \in N(u) \cap N(v)$ and contract the path $uwv$, we then obtain a $K_{1,t}$-minor in $K$, and thus a $K_{2,t}$-minor in $G$. Similarly, for any $u,v\in V(K)$ with $uv\in E(K)$, we have $|N(u)\cap N(v)| \geq t-3$.
Moreover, note that for any $u,v \in V(K)$, $|N(u) \cap N(v)| \leq t-2$, since otherwise $\{u,v\}$ and $(N(u)\cap N(v)) \cup \{u_0\}$ forms a $K_{2,t}$ in $G$, giving a contradiction.
Hence, we have that for any $u,v\in V(K)$ with $uv\notin E(K)$, $|N(u)\cap N(v)| = t-2$.

Now if $K$ is not a clique on $t$ vertices, then let $u,v\in V(K)$ be two vertices in $K$ such that $uv\not\in E(K)$. By the above claim, there exists $u', v' \in V(K)$ such that $u'\in N(u)\backslash N(v)$ and $v' \in N(v) \backslash N(u)$.

We claim that $u'v' \notin E(K)$. Indeed, if $u'v'\in E(K)$, contract $v'u'$ into $w'$. Then $\{u,v\} \cup \lp \{w', u_0\} \cup (N(u)\cap N(v)) \rp$ is a $K_{2,t}$ minor in $G$, giving a contradiction.
Now note that since $u'v \notin E(K)$, we have $|N(u')\cap N(v)| = t-2$. It follows that $N(u')\cap N(v) = N(u)\cap N(v)$. Similarly, $N(v') \cap N(u) = N(u)\cap N(v)$. 

We claim that each vertex in $N(u)\cap N(v)$ has exactly one non-neighbor in $N(u)\cap N(v)$. Indeed, let $w$ be an arbitrary vertex in $N(u)\cap N(v)$. Note that $w$ cannot be adjacent to all other vertices in $N(u)\cap N(v)$; otherwise since $w$ is adjacent to $u',u$, and $v'$, we then have $d(w)\geq (t-3)+3=t$, contradicting that $K$ is $(t-1)$-regular. On the other hand, suppose $w$ has at least two non-neighbors in $N(u)\cap N(v)$. Then it follows that $|N(w)\cap (N(u)\cap N(v))| \leq t-2 -3 = t-5$. Now observe that $N(u)\cap N(w) = (N(w)\cap N(u)\cap N(v)) \cup \{u'\}$. It follows that 
$|N(u)\cap N(w)| \leq t-4$, contradicting our claim before that any two adjacent vertices must have at least $t-3$ common neighbors. Hence $w$ has exactly one non-neighbor in $N(u)\cap N(v)$, say $w'$. But now observe that
$$N(w)\cap N(w') \supseteq (N(u)\cap N(v)\backslash \{w,w'\}) \cup \{u,u',v, v'\},$$
which implies that $|N(w)\cap N(w')| \geq t-4+4 = t$, contradicting that $K$ is $(t-1)$-regular.
Hence by contradiction, $K$ is a clique on $t$ vertices.
\end{proof}
This completes the proof of Lemma \ref{l:k2t}.
\end{proof}

\begin{proof}[Proof of Theorem \ref{thm:main} ]
For sufficiently large $n$, let $G$ be an extremal graph attaining the maximum spread among all $n$-vertex $K_{2,t}$-minor-free graphs. By Lemma \ref{l:k2t}, we only need to consider graphs in the form of $G_\ell = K_1\vee \left(\ell K_t \cup (n-1-\ell t)P_1 \right)$.
It also follows from Lemma \ref{l:k2t} that for $i\geq 1$, 
\begin{equation}
  a_i = \ell t(t-1)^i.
\end{equation}

For each $i\geq 2$, let $c_i(\ell)$ denote the $c_i$ value of $G_{\ell}$.  Plugging $a_i$'s into Equations \eqref{eq:c2}, \eqref{eq:c4}, and \eqref{eq:c6}, we get
\begin{align}
c_2(\ell)&= -\frac{3}{8}\frac{t^2(t-1)^2}{(n-1)^2}\ell^2 + \frac{1}{2} \frac{t(t-1)^2}{(n-1)}\ell,\\
c_4(\ell) &=  -\frac{105}{128}\frac{t^4(t-1)^4}{(n-1)^4}\ell^4 +\frac{35}{16} \frac{t^3(t-1)^4}{(n-1)^3}\ell^3 -\frac{15}{8}\frac{t^2(t-1)^4}{(n-1)^2}\ell^2 
+\frac{1}{2}\frac{t(t-1)^4}{(n-1)}\ell,\\
 c_6(\ell) &= -\frac{3003}{1024}\frac{t^6(t-1)^6}{(n-1)^6}\ell^6
    +\frac{3003}{256}\frac{t^5(t-1)^6}{(n-1)^5}\ell^5-\frac{1155}{64}\frac{t^4(t-1)^6}{(n-1)^4}\ell^4 \nonumber\\
    &\hspace*{7mm}+\frac{105}{8} \frac{t^3(t-1)^6}{(n-1)^3}\ell^3 -\frac{35}{8}\frac{t^2(t-1)^6}{(n-1)^2}\ell^2 
+\frac{1}{2}\frac{t(t-1)^6}{(n-1)}\ell.
\end{align}

Let $\ell_1= \frac{2(n-1)}{3t}$, which is the (possibly real) argmax value of $c_2(\ell)$. Let
$\ell_0=\lfloor \frac{2n+\xi_t}{3t}\rfloor$ be the target maximum integer point of $S(G_\ell)$. By Claim~\ref{cl:Asmall}, we assume that $\ell \in (\ell_1 - C\sqrt{n-1}, \ell_1 + C\sqrt{n-1})$. Let us compute $S(G_{\ell+1})-S(G_{\ell})$. We have
\begin{align*}
c_2(\ell+1)-c_2(\ell) &= -\frac{3}{8}\frac{t^2(t-1)^2}{(n-1)^2}(2\ell+1) + \frac{1}{2} \frac{t(t-1)^2}{(n-1)},\\
c_4(\ell+1) -c_4(\ell)&=  -\frac{105}{128}\frac{t^4(t-1)^4}{(n-1)^4}(4\ell^3+6\ell^2+4\ell+1) +\frac{35}{16} \frac{t^3(t-1)^4}{(n-1)^3}(3\ell^2+3\ell+1) \nonumber \\
&\hspace*{4mm}-\frac{15}{8}\frac{t^2(t-1)^4}{(n-1)^2}(2\ell+1) 
+\frac{1}{2}\frac{t(t-1)^4}{(n-1)},\\
    c_6(\ell+1)-c_6(\ell)
    &=   O\left(\frac{1}{n-1}\right).  
\end{align*}
Plugging $\ell=\ell_1\cdot \left(1+ O\left(\frac{1}{\sqrt{n-1}}\right)\right)$
into $c_4(\ell+1)-c_4(\ell)$, we have
$$ c_4(\ell+1)-c_4(\ell) = -\frac{1}{18}\frac{t(t-1)^4}{n-1} + O\lp\frac{1}{(n-1)^{3/2}}\rp.$$
Therefore,
we have
\begin{align}
    S(G_{\ell+1})-S(G_\ell) &= \frac{2(c_2(\ell+1)-c_2(\ell))}{\sqrt{n-1}}
    +  \frac{2(c_4(\ell+1)-c_4(\ell))}{(n-1)^{3/2}} 
    + \frac{2(c_6(\ell+1)-c_6(\ell))}{(n-1)^{5/2}} 
    + O\left(\frac{1}{(n-1)^{3}}\right) \nonumber \\
    &= \frac{2t(t-1)^2}{(n-1)^{5/2}}
    \left(-\frac{3}{8}t(2\ell+1) +\frac{1}{2}(n-1)
 - \frac{(t-1)^2}{18}  
    \right)
    + O\lp \frac{1}{(n-1)^3}\rp \nonumber\\
    &=-\frac{3t^2(t-1)^2}{2(n-1)^{5/2}}
    \left(\ell+\frac{1}{2} -\frac{2}{3t}(n-1)
 + \frac{2(t-1)^2}{27t}  
    \right) + O\left(\frac{1}{(n-1)^{3}}\right).    
    \label{eq:cha}
\end{align}

{\bf Case a:} $t\geq 3$ and $t$ is odd. Recall that in this case we let
$$\ell_0=\left\lfloor\frac{2n+\xi_t}{3t}\right\rfloor$$
where
$$\xi_t=\left\lfloor \frac{3t}{2}-2 - \frac{2(t-1)^2}{9}\right \rfloor. $$

For  $\ell \geq \ell_0$, we have 
\begin{align*}
\ell+\frac{1}{2} -\frac{2}{3t}(n-1)
 + \frac{2(t-1)^2}{27t} 
 &\geq \ell_0+\frac{1}{2} -\frac{2}{3t}(n-1)
 + \frac{2(t-1)^2}{27t} \\
 &\geq \frac{2n+\xi_t}{3t} -\left(1 -\frac{1}{3t}\right) +\frac{1}{2} -\frac{2(n-1)}{3t}
 + \frac{2(t-1)^2}{27t} \\
 &\geq \frac{1}{3t} \left(\xi_t+1 -  \left(\frac{3t}{2}-2 - \frac{2(t-1)^2}{9}\right)
 \right)\\
 &> 0.
 \end{align*}
Plugging it into Equation \eqref{eq:cha}, we have that for $\ell \geq \ell_0$,
$$S(G_{\ell+1})-S(G_\ell) \leq -\frac{t(t-1)^2\left(\xi_t+1-  \left(\frac{3t}{2}-2 - \frac{2(t-1)^2}{9})\right) \right)}{2(n-1)^{5/2}} + O\lp \frac{1}{(n-1)^3}\rp  <0.$$
When $\ell \leq \ell_0-1$, we have 
\begin{align*}
\ell+\frac{1}{2} -\frac{2}{3t}(n-1)
 + \frac{2(t-1)^2}{27t} 
 &\leq \ell_0 -1 +\frac{1}{2} -\frac{2}{3t}(n-1)
 + \frac{2(t-1)^2}{27t} \\
 &\leq \frac{2n+\xi_t}{3t} -1 +\frac{1}{2} -\frac{2(n-1)}{3t}
 + \frac{2(t-1)^2}{27t} \\
 &\leq \frac{1}{3t} \left(\xi_t-  \left(\frac{3t}{2}-2 - \frac{2(t-1)^2}{9}\right)
 \right)\\
 &< 0.
\end{align*}
At the last step, we observe that
$\frac{3t}{2}-2 - \frac{2(t-1)^2}{9}$ is not an integer for odd $t$. Thus, the inequality is strict.
Therefore, for $\ell \leq \ell_0-1$,
$$S(G_{\ell+1})-S(G_\ell) \geq \frac{t(t-1)^2\left(-\eta_t+  \left(\frac{3t}{4}-2 - \frac{2(t-1)^2}{9})\right) \right)}{2(n-1)^{5/2}} + O\lp \frac{1}{(n-1)^3}\rp  >0.$$

Therefore, $S(G_\ell)$ reaches the unique maximum at $\ell_0$ for sufficiently large $n$. This completes the case for odd $t$.

{\bf Case b:} $t\geq 2$ even. Let
$$\ell_0=\left\lfloor \frac{n+\eta_t}{3t/2}\right\rfloor$$
where
$$\eta_t=\left\lfloor \frac{3t}{4}-1 - \frac{(t-1)^2}{9}\right \rfloor. $$
For  $\ell \geq \ell_0$, we have 
\begin{align*}
\ell+\frac{1}{2} -\frac{2}{3t}(n-1)
 + \frac{2(t-1)^2}{27t} 
 &\geq \ell_0+\frac{1}{2} -\frac{2}{3t}(n-1)
 + \frac{2(t-1)^2}{27t} \\
 &\geq \frac{n+\eta_t}{3t/2} -\left(1 -\frac{1}{3t/2}\right) +\frac{1}{2} -\frac{2(n-1)}{3t}
 + \frac{2(t-1)^2}{27t} \\
 &\geq \frac{2}{3t} \left(\eta_t+1 -  \left(\frac{3t}{4}-1 - \frac{(t-1)^2}{9}\right)
 \right)\\
 &> 0.
 \end{align*}
Plugging it into Equation \eqref{eq:cha}, we have that for $\ell \geq \ell_0$,
$$S(G_{\ell+1})-S(G_\ell) \leq -\frac{t(t-1)^2\left(\eta_t+1-  \left(\frac{3t}{4}-1 - \frac{(t-1)^2}{9}\right) \right)}{2(n-1)^{5/2}} + O\lp \frac{1}{(n-1)^3}\rp  <0.$$

When $\ell \leq \ell_0-1$, we have 
\begin{align}
\ell+\frac{1}{2} -\frac{2}{3t}(n-1)
 + \frac{2(t-1)^2}{27t} 
 &\leq \ell_0 -1 +\frac{1}{2} -\frac{2}{3t}(n-1) 
 + \frac{2(t-1)^2}{27t} \nonumber\\
 &\leq \frac{\eta_t}{3t/2} -1 +\frac{1}{2} +\frac{2}{3t}
 + \frac{2(t-1)^2}{27t} \nonumber \\
 &\leq \frac{2}{3t} \left(\eta_t-  \left(\frac{3t}{4}-1 - \frac{(t-1)^2}{9}\right)
 \right) \nonumber\\
 &\leq 0 \label{eq:last-edge-case}.
\end{align}
 If $\frac{2}{3t}(n+\eta_t)$ is not an integer, we have 
 $$S(G_{\ell+1})-S(G_\ell) > \frac{t(t-1)^2\left(-\eta_t+  \left(\frac{3t}{4}-1 - \frac{(t-1)^2}{9}\right) \right)}{2(n-1)^{5/2}} + O\lp \frac{1}{(n-1)^3}\rp  \geq 0.$$
Therefore $\ell_0$ is the unique maximum point of $S(G_\ell)$. 

Now we assume $\frac{2}{3t}(n+\eta_t)$ is an integer.
Observe that $\frac{3t}{4}-1 - \frac{(t-1)^2}{9}$ is an integer if and only if $t$ is divisible by $4$ and $t-1$ is divisible by $3$. Therefore, the inequality \eqref{eq:last-edge-case} is strict except for the case when $t\equiv 4  \mod 12$ and $\ell=\ell_0-1$. It implies that for $t\not\equiv 4 \mod 12$,
$$S(G_{\ell+1})-S(G_\ell) \geq \frac{t(t-1)^2\left(-\eta_t+  \left(\frac{3t}{4}-1 - \frac{(t-1)^2}{9}\right) \right)}{2(n-1)^{5/2}} + O\lp \frac{1}{(n-1)^3}\rp  >0.$$
Thus for $t\not\equiv 4 \mod 12$, $S(G_\ell)$ reaches the unique maximum at $\ell_0=\left\lfloor \frac{2n+\xi_t}{3t}\right\rfloor $ for sufficiently large $n$.

Now we consider the remaining case that $t\equiv 4 \pmod{12}$ and  $\frac{2}{3t}(n+\eta_t)$ is an integer. In this case,  $S(G_{\ell})$ can only achieve the maximum at 
$\ell_0$, or $\ell_0-1$, or both. In fact, we claim both of them are maximum points.

Let $k=(t-4)/12$ and $\ell_0=\frac{2}{3t}(n+\eta_t)$. Note that by our assumption $k$ and $\ell_0$ are both integers. Rearranging the terms, we have
\begin{align}
    t      &=12k+4, \label{eq:t}\\
    \eta_t &=1+k-16k^2, \label{eta}\\
    n      &=6(3k+1)\ell_0 + 16k^2 -k -1. \label{eq:n}
\end{align}

Now we compute the spread of $G_{\ell}$ where $\ell=\ell_0$ or $\ell_0-1$. 
By Lemma \ref{l8}, $\lambda_1$ and $\lambda_n$ of $G_\ell$ satisfies the equation
\begin{align*}
    \lambda^2&=(n-1) +\sum_{k=1}^\infty \lambda^{-k}\one' A_H^k\one\\
    &=(n-1) +\sum_{k=1}^\infty \lambda^{-k}\ell t (t-1)^k\\
    &= (n-1) + \ell t \frac{(t-1)/\lambda}{1-(t-1)/\lambda}\\
    &= (n-1) + \frac{\ell t (t-1)}{\lambda - (t-1)}.
\end{align*}
Simplifying  it, we get
\begin{equation}\label{eq:P}
\lambda^3-(t-1)\lambda^2-(n-1)\lambda +(t-1)(n-1-\ell t)=0.
\end{equation}
Let us define the {\em spread of a polynomial $\phi$}, denoted by $S(\phi)$, as the difference of largest root and the smallest root. Thus, we have
$$S(G_\ell)=S(\phi_\ell), $$
where $\phi_\ell$ is defined by the left hand side of Equation
\eqref{eq:P}. Let $\lambda=x+\frac{t-1}{3}$. The cubic equation \eqref{eq:P} can be written as
\begin{equation} \label{eq:Preduced}
    x^3 -\frac{1}{3}(n+t^2-2t-2)x + \frac{1}{27}(-27  l t^{2} - 2 t^{3} + 27 l t + 18  n t + 6  t^{2} - 18 n - 24  t + 20)=0.
\end{equation}
Now plugging $\ell=\ell_0$, $t$ as in Equation \eqref{eq:t}, and
$n$ as in Equation \eqref{eq:n}, into Equation \eqref{eq:Preduced}, we get
\begin{equation} \label{eq:Pl0}
    x^3 - (6(3k+1)\ell_0 +64k^2 +23k+1) x -(72k^2+42k+6)=0.
\end{equation}
Similarly, plugging $\ell=\ell_0-1$, $t$ as in Equation \eqref{eq:t}, and
$n$ as in Equation \eqref{eq:n}, into Equation \eqref{eq:Preduced}, we get
\begin{equation} \label{eq:Pl0-1}
    x^3 - (6(3k+1)\ell_0 +64k^2 +23k+1) x+(72k^2+42k+6)=0.
\end{equation}
Let the $\phi_1$ (or $\phi_2$) denote the cubic polynomial in the left hand of Equation \eqref{eq:Pl0} (or Equation
\eqref{eq:Pl0-1} respectively). 
Observe that $\phi_2(x)=-\phi_1(-x)$. If
$\phi_1$ has three real roots $x_1\leq x_2\leq x_3$, then
$\phi_2$ has three real roots $-x_3\leq -x_2\leq -x_1$.
Thus $$S(\phi_1)=x_3-x_1=(-x_1)-(-x_3)=S(\phi_2).$$
It then follows that 
$$S(G_{\ell_0})= S(\phi_{\ell_0}) =S(\phi_1) =S(\phi_2)=S(\phi_{\ell_0-1})=S(G_{\ell_0-1}).$$
Therefore both $G_{\ell_0}$ and $G_{\ell_0-1}$ are extremal graphs for this special case. This completes the proof of Theorem \ref{thm:main}.
\end{proof}

\section*{Acknowledgement}
We thank the anonymous referee for a number of useful comments which improved the presentation of the manuscript.

\end{document}